\documentclass[a4paper]{amsart}
\setcounter{page}{1}
\setlength{\oddsidemargin}{0.5cm}
\setlength{\evensidemargin}{0.5cm}
\setlength{\textwidth}{15cm}
\usepackage{graphicx,color}
\usepackage{pict2e}
\usepackage{amsmath}
\usepackage{amssymb}
\usepackage{amsbsy}
\usepackage{amsgen}
\usepackage{amsopn}
\usepackage{amscd}
\usepackage{amstext}
\usepackage{amsthm}

\pagestyle{plain}

\theoremstyle{definition}

\theoremstyle{plain}
\newtheorem{thm}{Theorem}[section]

\newtheorem{lem}{Lemma}[section]

\theoremstyle{remark}
\newtheorem{rem}{Remark}[section]

\theoremstyle{definition}

\begin{document}
\title{Chain homotopy maps and a universal differential for Khovanov-type homology}
\author{Noboru Ito}\thanks{The author is a Research Fellow of the Japan Society for the Promotion of Science.  This work was partly supported by KAKENHI}
\maketitle

\begin{abstract}
We give chain homotopy maps of Khovanov-type link homology of a universal differential.  The universal differential, discussed by Mikhail Khovanov, Marco Mackaay, Paul Turner and Pedro Vaz, contains the original Khovanov's differential and Lee's differential.  
We also consider the conditions of any differential ensuring the Reidemeister invariance for the chain homotopy maps.  
\end{abstract}

\section{Introduction.  }
\subsection{Motivation.  }
The aim of this note is to show that there exists certain homotopy maps ensuring the Reidemeister invariance of Khovanov-type homology of a universal differential.  
By using Viro's construction \cite{viro, jacobsson} of the Khovanov homology $\mathcal{H}^{i,j}$ \cite{khovanov}, a universal differential \cite{khovanovF, turner, MTV} $\mathcal{C}^{i}$ $\to$ $\mathcal{C}^{i+1}$ on Khovanov complex $\mathcal{C}^{i}$ $:=$ $\oplus_{j}C^{i,j}$ \cite{jacobsson} over the ring $\mathbb{Z}[s, t]$ can be defined as 
\begin{equation}
\delta_{s, t}(S \otimes [x]) := \sum_{T}T \otimes [xa] 
\end{equation}
such that
\begin{equation}\label{dif}
\begin{minipage}{50pt}
\begin{picture}(50,50)
\put(0,10){\line(1,1){30}}
\qbezier(30,10)(30,10)(20,20)
\qbezier(10,30)(0,40)(0,40)
\linethickness{2pt}
\put(15,15){\circle*{5}}
\put(15,35){\circle*{5}}
\put(15,15){\line(0,1){20}}
\put(22,23){$q$}
\put(3,23){$p$}
\put(13,5){$a$}
\end{picture}
\end{minipage} \otimes [x] \qquad
\longmapsto \qquad\quad\quad
\begin{minipage}{50pt}
\begin{picture}(50,50)
\put(0,10){\line(1,1){30}}
\qbezier(30,10)(30,10)(20,20)
\qbezier(10,30)(0,40)(0,40)
\linethickness{2pt}
\put(5,25){\circle*{5}}
\put(25,25){\circle*{5}}
\put(5,25){\line(1,0){20}}
\put(5,40){$q:p$}
\put(5,5){$p:q$}
\put(30,23){$a$}
\end{picture}
\end{minipage}\ \  \otimes [xa]
\end{equation}
where $a$ is a crossing of a link diagram, $p$ and $q$ denote signs ($+$ or $-$) and $p:q$ and $q:p$ are defined by Figure \ref{frobenius}.  

This differential follows a universal Frobenius algebra \cite{khovanovF}.  As in \cite{turner, MTV}, the differential possess the following property.  
When $s$ $=$ $t$ $=$ $0$, we get the original Khovanov theory \cite{khovanov} and when $(s, t)$ $=$ $(0,1)$, Lee theory \cite{lee} introducing Rasmussen's invariants \cite{rasmussen}.  
When we choose the coefficient $\mathbb{Z}/2[s]$ (take $t$ $=$ $0$), the theory is known as Bar-Natan theory \cite{bar-natan}.  
On the other hand, there exist the explicit chain homotopy maps \cite{viro, ito} for the original Khovanov homology.  The chain homotopy maps ensuring Reidemeister invariance of Khovanov homology are given by \cite{viro} for the first Reidemeister move, by \cite{ito} for the other Reidemeister moves.  

Here, the following question naturally arise.  Replacing the differential of the original Khovanov homology \cite{khovanov} by the universal differential, what are chain homotopy maps and retractions needed for ensuring the Reidemeister invariance? 
When the homotopy maps \cite{viro, ito} are given, is there any explicit condition for a universal differential on the Khovanov complex to induce the Reidemeister invariance of a family of homology?  

In this note, we discuss these problem in detail.  We arrive at the result (Theorem \ref{t1} and \ref{t2}) as mentioned at the end of this note.

\subsection{Definition and Notation.  }

We use the definition of Khovanov homology defined in \cite[Section 2]{jacobsson} except for replacing the Frobenius calculus \cite[Figure 3]{jacobsson} in the definition of the differential $d$ by Figure \ref{frobenius}.  Thus let us use the same symbols $p:q$ and $q:p$ as \cite[Figure 3]{jacobsson} in a generalised meaning as follows: for a given signs $p$ and $q$, the signs $p:q$ and $q:p$ of circles are defined by Figure \ref{frobenius}.  We denote the generalised differential by $\delta_{s,t}$.  $\operatorname{m}(p, q)$ denotes $p:q$ in (\ref{f1})--(\ref{f2}) for signs $p$, $q$.  

\begin{rem}
Note that the symbols $p:q$ and $q:p$ are generalised.  
In fact, considering the case $s$ $=$ $t$ $=$ $0$ (resp. $s$ $=$ $0$ and $t$ $=$ $1$), the differential give the original Khovanov homology (resp. Lee's differential).  
\end{rem}

\begin{figure}
\begin{align}
\begin{minipage}{90pt}\label{f1}
         \begin{picture}(90,50)
         \put(20,10){\line(1,0){1}}
        \put(17.5,10.2){\line(-4,1){1}}
        \put(15,10.9){\line(-3,1){1}}
        \put(12.5,12){\line(-2,1){1}}
        \put(10.6,13.2){\line(-1,1){.9}}
        \put(8.9,14.9){\line(-3,4){1}}
        \put(7.3,17){\line(-1,2){.8}}
        \put(6,20){\line(-1,3){.5}}
        \put(5.3,22.7){\line(-1,6){.2}}
        \put(5,25){\line(0,1){1}}
        \put(20,40){\line(-1,0){1}}
        \put(17.5,39.8){\line(-4,-1){1}}
        \put(15,39.1){\line(-3,-1){1}}
        \put(12.5,38){\line(-2,-1){1}}
        \put(10.6,36.8){\line(-1,-1){.9}}
        \put(8.9,35.1){\line(-3,-4){1}}
        \put(7.3,33){\line(-1,-2){.8}}
        \put(6,30){\line(-1,-3){.5}}
        \put(5.3,27.3){\line(-1,-6){.2}}
        \put(20, 23){$+$}
                \put(20,25){\oval(30,30)[r]}
                \put(70,25){\oval(30,30)[l]}
        \put(62,23){$+$}
                 \put(70,10){\line(1,0){1}}
        \put(72.5,10.2){\line(4,1){1}}
        \put(75,10.9){\line(3,1){1}}
        \put(77.5,12){\line(2,1){1}}
        \put(79.4,13.2){\line(1,1){.9}}
        \put(81.1,14.9){\line(3,4){1}}
        \put(82.7,17){\line(1,2){.8}}
        \put(84,20){\line(1,3){.5}}
        \put(84.7,22.7){\line(1,6){.2}}
        \put(85,25){\line(0,1){1}}
        \put(70,40){\line(1,0){1}}
        \put(72.5,39.8){\line(4,-1){1}}
        \put(75,39.1){\line(3,-1){1}}
        \put(77.5,38){\line(2,-1){1}}
        \put(79.4,36.8){\line(1,-1){.9}}
        \put(81.1,35.1){\line(3,-4){1}}
        \put(82.7,33){\line(1,-2){.8}}
        \put(84,30){\line(1,-3){.5}}
        \put(84.7,27.3){\line(1,-6){.2}}
        \end{picture}
        \end{minipage}
&\begin{minipage}{90pt}
\begin{picture}(90,50)
\put(10,24){\vector(1,0){50}}
\end{picture}
        \end{minipage}
\begin{minipage}{70pt}
\begin{picture}(70,50)
\put(-11,22){$s$}
\put(25,22){$+$}
\put(15,10){\line(1,0){1}}
        \put(12.5,10.2){\line(-4,1){1}}
        \put(10,10.9){\line(-3,1){1}}
        \put(7.5,12){\line(-2,1){1}}
        \put(5.6,13.2){\line(-1,1){.9}}
        \put(3.9,14.9){\line(-3,4){1}}
        \put(2.3,17){\line(-1,2){.8}}
        \put(1,20){\line(-1,3){.5}}
        \put(0.3,22.7){\line(-1,6){.2}}
        \put(0,25){\line(0,1){1}}
        \put(15,40){\line(-1,0){1}}
        \put(12.5,39.8){\line(-4,-1){1}}
        \put(10,39.1){\line(-3,-1){1}}
        \put(7.5,38){\line(-2,-1){1}}
        \put(5.6,36.8){\line(-1,-1){.9}}
        \put(3.9,35.1){\line(-3,-4){1}}
        \put(2.3,33){\line(-1,-2){.8}}
        \put(1,30){\line(-1,-3){.5}}
        \put(0.3,27.3){\line(-1,-6){.2}}
\qbezier(15,40)(30,25)(45,40)
\qbezier(15,10)(30,25)(45,10)
 \put(45,10){\line(1,0){1}}
        \put(47.5,10.2){\line(4,1){1}}
        \put(50,10.9){\line(3,1){1}}
        \put(52.5,12){\line(2,1){1}}
        \put(54.4,13.2){\line(1,1){.9}}
        \put(56.1,14.9){\line(3,4){1}}
        \put(57.7,17){\line(1,2){.8}}
        \put(59,20){\line(1,3){.5}}
        \put(59.7,22.7){\line(1,6){.2}}
        \put(60,25){\line(0,1){1}}
        \put(45,40){\line(1,0){1}}
        \put(47.5,39.8){\line(4,-1){1}}
        \put(50,39.1){\line(3,-1){1}}
        \put(52.5,38){\line(2,-1){1}}
        \put(54.4,36.8){\line(1,-1){.9}}
        \put(56.1,35.1){\line(3,-4){1}}
        \put(57.7,33){\line(1,-2){.8}}
        \put(59,30){\line(1,-3){.5}}
        \put(59.7,27.3){\line(1,-6){.2}}
        \end{picture}
    \end{minipage}
    \text{$+$}\quad
    \begin{minipage}{80pt}
\begin{picture}(80,50)
\put(-8,22){$t$}
\put(15,10){\line(1,0){1}}
\put(25,22){$-$}
        \put(12.5,10.2){\line(-4,1){1}}
        \put(10,10.9){\line(-3,1){1}}
        \put(7.5,12){\line(-2,1){1}}
        \put(5.6,13.2){\line(-1,1){.9}}
        \put(3.9,14.9){\line(-3,4){1}}
        \put(2.3,17){\line(-1,2){.8}}
        \put(1,20){\line(-1,3){.5}}
        \put(0.3,22.7){\line(-1,6){.2}}
        \put(0,25){\line(0,1){1}}
        \put(15,40){\line(-1,0){1}}
        \put(12.5,39.8){\line(-4,-1){1}}
        \put(10,39.1){\line(-3,-1){1}}
        \put(7.5,38){\line(-2,-1){1}}
        \put(5.6,36.8){\line(-1,-1){.9}}
        \put(3.9,35.1){\line(-3,-4){1}}
        \put(2.3,33){\line(-1,-2){.8}}
        \put(1,30){\line(-1,-3){.5}}
        \put(0.3,27.3){\line(-1,-6){.2}}
\qbezier(15,40)(30,25)(45,40)
\qbezier(15,10)(30,25)(45,10)
 \put(45,10){\line(1,0){1}}
        \put(47.5,10.2){\line(4,1){1}}
        \put(50,10.9){\line(3,1){1}}
        \put(52.5,12){\line(2,1){1}}
        \put(54.4,13.2){\line(1,1){.9}}
        \put(56.1,14.9){\line(3,4){1}}
        \put(57.7,17){\line(1,2){.8}}
        \put(59,20){\line(1,3){.5}}
        \put(59.7,22.7){\line(1,6){.2}}
        \put(60,25){\line(0,1){1}}
        \put(45,40){\line(1,0){1}}
        \put(47.5,39.8){\line(4,-1){1}}
        \put(50,39.1){\line(3,-1){1}}
        \put(52.5,38){\line(2,-1){1}}
        \put(54.4,36.8){\line(1,-1){.9}}
        \put(56.1,35.1){\line(3,-4){1}}
        \put(57.7,33){\line(1,-2){.8}}
        \put(59,30){\line(1,-3){.5}}
        \put(59.7,27.3){\line(1,-6){.2}}
        \end{picture}
    \end{minipage}\\
    \begin{minipage}{90pt}\label{f2}
         \begin{picture}(90,50)
         \put(20,10){\line(1,0){1}}
        \put(17.5,10.2){\line(-4,1){1}}
        \put(15,10.9){\line(-3,1){1}}
        \put(12.5,12){\line(-2,1){1}}
        \put(10.6,13.2){\line(-1,1){.9}}
        \put(8.9,14.9){\line(-3,4){1}}
        \put(7.3,17){\line(-1,2){.8}}
        \put(6,20){\line(-1,3){.5}}
        \put(5.3,22.7){\line(-1,6){.2}}
        \put(5,25){\line(0,1){1}}
        \put(20,40){\line(-1,0){1}}
        \put(17.5,39.8){\line(-4,-1){1}}
        \put(15,39.1){\line(-3,-1){1}}
        \put(12.5,38){\line(-2,-1){1}}
        \put(10.6,36.8){\line(-1,-1){.9}}
        \put(8.9,35.1){\line(-3,-4){1}}
        \put(7.3,33){\line(-1,-2){.8}}
        \put(6,30){\line(-1,-3){.5}}
        \put(5.3,27.3){\line(-1,-6){.2}}
        \put(20, 23){$+$}
                \put(20,25){\oval(30,30)[r]}
                \put(70,25){\oval(30,30)[l]}
        \put(62,23){$-$}
                 \put(70,10){\line(1,0){1}}
        \put(72.5,10.2){\line(4,1){1}}
        \put(75,10.9){\line(3,1){1}}
        \put(77.5,12){\line(2,1){1}}
        \put(79.4,13.2){\line(1,1){.9}}
        \put(81.1,14.9){\line(3,4){1}}
        \put(82.7,17){\line(1,2){.8}}
        \put(84,20){\line(1,3){.5}}
        \put(84.7,22.7){\line(1,6){.2}}
        \put(85,25){\line(0,1){1}}
        \put(70,40){\line(1,0){1}}
        \put(72.5,39.8){\line(4,-1){1}}
        \put(75,39.1){\line(3,-1){1}}
        \put(77.5,38){\line(2,-1){1}}
        \put(79.4,36.8){\line(1,-1){.9}}
        \put(81.1,35.1){\line(3,-4){1}}
        \put(82.7,33){\line(1,-2){.8}}
        \put(84,30){\line(1,-3){.5}}
        \put(84.7,27.3){\line(1,-6){.2}}
        \end{picture}
        \end{minipage}
&\begin{minipage}{90pt}
\begin{picture}(90,50)
\put(10,24){\vector(1,0){50}}
\end{picture}
        \end{minipage}
    \begin{minipage}{70pt}
\begin{picture}(70,50)
\put(25,22){$+$}
\put(15,10){\line(1,0){1}}
        \put(12.5,10.2){\line(-4,1){1}}
        \put(10,10.9){\line(-3,1){1}}
        \put(7.5,12){\line(-2,1){1}}
        \put(5.6,13.2){\line(-1,1){.9}}
        \put(3.9,14.9){\line(-3,4){1}}
        \put(2.3,17){\line(-1,2){.8}}
        \put(1,20){\line(-1,3){.5}}
        \put(0.3,22.7){\line(-1,6){.2}}
        \put(0,25){\line(0,1){1}}
        \put(15,40){\line(-1,0){1}}
        \put(12.5,39.8){\line(-4,-1){1}}
        \put(10,39.1){\line(-3,-1){1}}
        \put(7.5,38){\line(-2,-1){1}}
        \put(5.6,36.8){\line(-1,-1){.9}}
        \put(3.9,35.1){\line(-3,-4){1}}
        \put(2.3,33){\line(-1,-2){.8}}
        \put(1,30){\line(-1,-3){.5}}
        \put(0.3,27.3){\line(-1,-6){.2}}
\qbezier(15,40)(30,25)(45,40)
\qbezier(15,10)(30,25)(45,10)
 \put(45,10){\line(1,0){1}}
        \put(47.5,10.2){\line(4,1){1}}
        \put(50,10.9){\line(3,1){1}}
        \put(52.5,12){\line(2,1){1}}
        \put(54.4,13.2){\line(1,1){.9}}
        \put(56.1,14.9){\line(3,4){1}}
        \put(57.7,17){\line(1,2){.8}}
        \put(59,20){\line(1,3){.5}}
        \put(59.7,22.7){\line(1,6){.2}}
        \put(60,25){\line(0,1){1}}
        \put(45,40){\line(1,0){1}}
        \put(47.5,39.8){\line(4,-1){1}}
        \put(50,39.1){\line(3,-1){1}}
        \put(52.5,38){\line(2,-1){1}}
        \put(54.4,36.8){\line(1,-1){.9}}
        \put(56.1,35.1){\line(3,-4){1}}
        \put(57.7,33){\line(1,-2){.8}}
        \put(59,30){\line(1,-3){.5}}
        \put(59.7,27.3){\line(1,-6){.2}}
        \end{picture}
    \end{minipage}\\
    \begin{minipage}{90pt}\label{f3}
         \begin{picture}(90,50)
         \put(20,10){\line(1,0){1}}
        \put(17.5,10.2){\line(-4,1){1}}
        \put(15,10.9){\line(-3,1){1}}
        \put(12.5,12){\line(-2,1){1}}
        \put(10.6,13.2){\line(-1,1){.9}}
        \put(8.9,14.9){\line(-3,4){1}}
        \put(7.3,17){\line(-1,2){.8}}
        \put(6,20){\line(-1,3){.5}}
        \put(5.3,22.7){\line(-1,6){.2}}
        \put(5,25){\line(0,1){1}}
        \put(20,40){\line(-1,0){1}}
        \put(17.5,39.8){\line(-4,-1){1}}
        \put(15,39.1){\line(-3,-1){1}}
        \put(12.5,38){\line(-2,-1){1}}
        \put(10.6,36.8){\line(-1,-1){.9}}
        \put(8.9,35.1){\line(-3,-4){1}}
        \put(7.3,33){\line(-1,-2){.8}}
        \put(6,30){\line(-1,-3){.5}}
        \put(5.3,27.3){\line(-1,-6){.2}}
        \put(20, 23){$-$}
                \put(20,25){\oval(30,30)[r]}
                \put(70,25){\oval(30,30)[l]}
        \put(62,23){$+$}
                 \put(70,10){\line(1,0){1}}
        \put(72.5,10.2){\line(4,1){1}}
        \put(75,10.9){\line(3,1){1}}
        \put(77.5,12){\line(2,1){1}}
        \put(79.4,13.2){\line(1,1){.9}}
        \put(81.1,14.9){\line(3,4){1}}
        \put(82.7,17){\line(1,2){.8}}
        \put(84,20){\line(1,3){.5}}
        \put(84.7,22.7){\line(1,6){.2}}
        \put(85,25){\line(0,1){1}}
        \put(70,40){\line(1,0){1}}
        \put(72.5,39.8){\line(4,-1){1}}
        \put(75,39.1){\line(3,-1){1}}
        \put(77.5,38){\line(2,-1){1}}
        \put(79.4,36.8){\line(1,-1){.9}}
        \put(81.1,35.1){\line(3,-4){1}}
        \put(82.7,33){\line(1,-2){.8}}
        \put(84,30){\line(1,-3){.5}}
        \put(84.7,27.3){\line(1,-6){.2}}
        \end{picture}
        \end{minipage}
&\begin{minipage}{90pt}
\begin{picture}(90,50)
\put(10,24){\vector(1,0){50}}
\end{picture}
        \end{minipage}
    \begin{minipage}{70pt}
\begin{picture}(70,50)
\put(25,22){$+$}
\put(15,10){\line(1,0){1}}
        \put(12.5,10.2){\line(-4,1){1}}
        \put(10,10.9){\line(-3,1){1}}
        \put(7.5,12){\line(-2,1){1}}
        \put(5.6,13.2){\line(-1,1){.9}}
        \put(3.9,14.9){\line(-3,4){1}}
        \put(2.3,17){\line(-1,2){.8}}
        \put(1,20){\line(-1,3){.5}}
        \put(0.3,22.7){\line(-1,6){.2}}
        \put(0,25){\line(0,1){1}}
        \put(15,40){\line(-1,0){1}}
        \put(12.5,39.8){\line(-4,-1){1}}
        \put(10,39.1){\line(-3,-1){1}}
        \put(7.5,38){\line(-2,-1){1}}
        \put(5.6,36.8){\line(-1,-1){.9}}
        \put(3.9,35.1){\line(-3,-4){1}}
        \put(2.3,33){\line(-1,-2){.8}}
        \put(1,30){\line(-1,-3){.5}}
        \put(0.3,27.3){\line(-1,-6){.2}}
\qbezier(15,40)(30,25)(45,40)
\qbezier(15,10)(30,25)(45,10)
 \put(45,10){\line(1,0){1}}
        \put(47.5,10.2){\line(4,1){1}}
        \put(50,10.9){\line(3,1){1}}
        \put(52.5,12){\line(2,1){1}}
        \put(54.4,13.2){\line(1,1){.9}}
        \put(56.1,14.9){\line(3,4){1}}
        \put(57.7,17){\line(1,2){.8}}
        \put(59,20){\line(1,3){.5}}
        \put(59.7,22.7){\line(1,6){.2}}
        \put(60,25){\line(0,1){1}}
        \put(45,40){\line(1,0){1}}
        \put(47.5,39.8){\line(4,-1){1}}
        \put(50,39.1){\line(3,-1){1}}
        \put(52.5,38){\line(2,-1){1}}
        \put(54.4,36.8){\line(1,-1){.9}}
        \put(56.1,35.1){\line(3,-4){1}}
        \put(57.7,33){\line(1,-2){.8}}
        \put(59,30){\line(1,-3){.5}}
        \put(59.7,27.3){\line(1,-6){.2}}
        \end{picture}
    \end{minipage}\\
    \begin{minipage}{90pt}\label{f4}
         \begin{picture}(90,50)
         \put(20,10){\line(1,0){1}}
        \put(17.5,10.2){\line(-4,1){1}}
        \put(15,10.9){\line(-3,1){1}}
        \put(12.5,12){\line(-2,1){1}}
        \put(10.6,13.2){\line(-1,1){.9}}
        \put(8.9,14.9){\line(-3,4){1}}
        \put(7.3,17){\line(-1,2){.8}}
        \put(6,20){\line(-1,3){.5}}
        \put(5.3,22.7){\line(-1,6){.2}}
        \put(5,25){\line(0,1){1}}
        \put(20,40){\line(-1,0){1}}
        \put(17.5,39.8){\line(-4,-1){1}}
        \put(15,39.1){\line(-3,-1){1}}
        \put(12.5,38){\line(-2,-1){1}}
        \put(10.6,36.8){\line(-1,-1){.9}}
        \put(8.9,35.1){\line(-3,-4){1}}
        \put(7.3,33){\line(-1,-2){.8}}
        \put(6,30){\line(-1,-3){.5}}
        \put(5.3,27.3){\line(-1,-6){.2}}
        \put(20, 23){$-$}
                \put(20,25){\oval(30,30)[r]}
                \put(70,25){\oval(30,30)[l]}
        \put(62,23){$-$}
                 \put(70,10){\line(1,0){1}}
        \put(72.5,10.2){\line(4,1){1}}
        \put(75,10.9){\line(3,1){1}}
        \put(77.5,12){\line(2,1){1}}
        \put(79.4,13.2){\line(1,1){.9}}
        \put(81.1,14.9){\line(3,4){1}}
        \put(82.7,17){\line(1,2){.8}}
        \put(84,20){\line(1,3){.5}}
        \put(84.7,22.7){\line(1,6){.2}}
        \put(85,25){\line(0,1){1}}
        \put(70,40){\line(1,0){1}}
        \put(72.5,39.8){\line(4,-1){1}}
        \put(75,39.1){\line(3,-1){1}}
        \put(77.5,38){\line(2,-1){1}}
        \put(79.4,36.8){\line(1,-1){.9}}
        \put(81.1,35.1){\line(3,-4){1}}
        \put(82.7,33){\line(1,-2){.8}}
        \put(84,30){\line(1,-3){.5}}
        \put(84.7,27.3){\line(1,-6){.2}}
        \end{picture}
        \end{minipage}
&\begin{minipage}{90pt}
\begin{picture}(90,50)
\put(10,24){\vector(1,0){50}}
\end{picture}
        \end{minipage}
    \begin{minipage}{70pt}
\begin{picture}(70,50)
\put(25,22){$-$}
\put(15,10){\line(1,0){1}}
        \put(12.5,10.2){\line(-4,1){1}}
        \put(10,10.9){\line(-3,1){1}}
        \put(7.5,12){\line(-2,1){1}}
        \put(5.6,13.2){\line(-1,1){.9}}
        \put(3.9,14.9){\line(-3,4){1}}
        \put(2.3,17){\line(-1,2){.8}}
        \put(1,20){\line(-1,3){.5}}
        \put(0.3,22.7){\line(-1,6){.2}}
        \put(0,25){\line(0,1){1}}
        \put(15,40){\line(-1,0){1}}
        \put(12.5,39.8){\line(-4,-1){1}}
        \put(10,39.1){\line(-3,-1){1}}
        \put(7.5,38){\line(-2,-1){1}}
        \put(5.6,36.8){\line(-1,-1){.9}}
        \put(3.9,35.1){\line(-3,-4){1}}
        \put(2.3,33){\line(-1,-2){.8}}
        \put(1,30){\line(-1,-3){.5}}
        \put(0.3,27.3){\line(-1,-6){.2}}
\qbezier(15,40)(30,25)(45,40)
\qbezier(15,10)(30,25)(45,10)
 \put(45,10){\line(1,0){1}}
        \put(47.5,10.2){\line(4,1){1}}
        \put(50,10.9){\line(3,1){1}}
        \put(52.5,12){\line(2,1){1}}
        \put(54.4,13.2){\line(1,1){.9}}
        \put(56.1,14.9){\line(3,4){1}}
        \put(57.7,17){\line(1,2){.8}}
        \put(59,20){\line(1,3){.5}}
        \put(59.7,22.7){\line(1,6){.2}}
        \put(60,25){\line(0,1){1}}
        \put(45,40){\line(1,0){1}}
        \put(47.5,39.8){\line(4,-1){1}}
        \put(50,39.1){\line(3,-1){1}}
        \put(52.5,38){\line(2,-1){1}}
        \put(54.4,36.8){\line(1,-1){.9}}
        \put(56.1,35.1){\line(3,-4){1}}
        \put(57.7,33){\line(1,-2){.8}}
        \put(59,30){\line(1,-3){.5}}
        \put(59.7,27.3){\line(1,-6){.2}}
        \end{picture}
    \end{minipage}\\ 
\begin{minipage}{70pt}\label{f5}
        \begin{picture}(50,60)
        \put(20,25){$+$}
        \put(25,0){\line(-1,0){1}}
        \put(22.6,0.2){\line(-4,1){1}}
        \put(20,0.9){\line(-3,1){1}}
        \put(17.5,2){\line(-2,1){1}}
        \put(15.6,3.2){\line(-1,1){.9}}
        \put(13.9,4.9){\line(-3,4){1}}
        \put(12.3,7){\line(-1,2){.8}}
        \put(11,10){\line(-1,3){.5}}
        \put(10.3,12.7){\line(-1,6){.2}}
        \put(10,15){\line(0,1){1}}
        \put(25,0){\line(1,0){1}}
        \put(27.5,0.2){\line(4,1){1}}
        \put(30,0.9){\line(3,1){1}}
        \put(32.5,2){\line(2,1){1}}
        \put(34.4,3.2){\line(1,1){.9}}
        \put(36.1,4.9){\line(3,4){1}}
        \put(37.7,7){\line(1,2){.8}}
        \put(39,10){\line(1,3){.5}}
        \put(39.7,12.7){\line(1,6){.2}}
        \put(40,15){\line(0,1){1}}
    \qbezier(10,40)(25,25)(10,15)
\qbezier(40,15)(25,25)(40,40)
        \put(25,55){\line(-1,0){1}}
        \put(22.6,54.8){\line(-4,-1){1}}
        \put(20,54.1){\line(-3,-1){1}}
        \put(17.5,53){\line(-2,-1){1}}
        \put(15.6,51.8){\line(-1,-1){.9}}
        \put(13.9,50.1){\line(-3,-4){1}}
        \put(12.3,48){\line(-1,-2){.8}}
        \put(11,45){\line(-1,-3){.5}}
        \put(10.3,42.3){\line(-1,-6){.2}}
        \put(10,40){\line(0,-1){1}}
        \put(25,55){\line(1,0){1}}
        \put(27.5,54.8){\line(4,-1){1}}
        \put(30,54.1){\line(3,-1){1}}
        \put(32.5,53){\line(2,-1){1}}
        \put(34.4,51.8){\line(1,-1){.9}}
        \put(36.1,50.1){\line(3,-4){1}}
        \put(37.7,48){\line(1,-2){.8}}
        \put(39,45){\line(1,-3){.5}}
        \put(39.7,42.3){\line(1,-6){.2}}
        \put(40,40){\line(0,-1){1}}
        \end{picture}
    \end{minipage} 
&\begin{minipage}{80pt}
        \begin{picture}(70,60)
\put(0,24){\vector(1,0){50}}
\end{picture}
    \end{minipage}
\begin{minipage}{50pt}
        \begin{picture}(50,60)
        \put(15,10){$+$}
        \put(15,40){$+$}
\put(20,0){\line(-1,0){1}}
        \put(17.6,0.2){\line(-4,1){1}}
        \put(15,0.9){\line(-3,1){1}}
        \put(12.5,2){\line(-2,1){1}}
        \put(10.6,3.2){\line(-1,1){.9}}
        \put(8.9,4.9){\line(-3,4){1}}
        \put(7.3,7){\line(-1,2){.8}}
        \put(6,10){\line(-1,3){.5}}
        \put(5.3,12.7){\line(-1,6){.2}}
        \put(5,15){\line(0,1){1}}
        \put(20,0){\line(1,0){1}}
        \put(22.5,0.2){\line(4,1){1}}
        \put(25,0.9){\line(3,1){1}}
        \put(27.5,2){\line(2,1){1}}
        \put(29.4,3.2){\line(1,1){.9}}
        \put(31.1,4.9){\line(3,4){1}}
        \put(32.7,7){\line(1,2){.8}}
        \put(34,10){\line(1,3){.5}}
        \put(34.7,12.7){\line(1,6){.2}}
        \put(35,15){\line(0,1){1}}
\qbezier(5,40)(20,20)(35,40)
\qbezier(5,15)(20,35)(35,15)
\put(20,55){\line(-1,0){1}}
        \put(17.6,54.8){\line(-4,-1){1}}
        \put(15,54.1){\line(-3,-1){1}}
        \put(12.5,53){\line(-2,-1){1}}
        \put(10.6,51.8){\line(-1,-1){.9}}
        \put(8.9,50.1){\line(-3,-4){1}}
        \put(7.3,48){\line(-1,-2){.8}}
        \put(6,45){\line(-1,-3){.5}}
        \put(5.3,42.3){\line(-1,-6){.2}}
        \put(5,40){\line(0,-1){1}}
        \put(20,55){\line(1,0){1}}
        \put(22.5,54.8){\line(4,-1){1}}
        \put(25,54.1){\line(3,-1){1}}
        \put(27.5,53){\line(2,-1){1}}
        \put(29.4,51.8){\line(1,-1){.9}}
        \put(31.1,50.1){\line(3,-4){1}}
        \put(32.7,48){\line(1,-2){.8}}
        \put(34,45){\line(1,-3){.5}}
        \put(34.7,42.3){\line(1,-6){.2}}
        \put(35,40){\line(0,-1){1}}
\end{picture}
    \end{minipage}\quad\!
    \text{$+$}\quad\quad\! 
    \begin{minipage}{50pt}
        \begin{picture}(50,60)
        \put(-9,24){$t$}
        \put(15,10){$-$}
        \put(15,40){$-$}
\put(20,0){\line(-1,0){1}}
        \put(17.6,0.2){\line(-4,1){1}}
        \put(15,0.9){\line(-3,1){1}}
        \put(12.5,2){\line(-2,1){1}}
        \put(10.6,3.2){\line(-1,1){.9}}
        \put(8.9,4.9){\line(-3,4){1}}
        \put(7.3,7){\line(-1,2){.8}}
        \put(6,10){\line(-1,3){.5}}
        \put(5.3,12.7){\line(-1,6){.2}}
        \put(5,15){\line(0,1){1}}
        \put(20,0){\line(1,0){1}}
        \put(22.5,0.2){\line(4,1){1}}
        \put(25,0.9){\line(3,1){1}}
        \put(27.5,2){\line(2,1){1}}
        \put(29.4,3.2){\line(1,1){.9}}
        \put(31.1,4.9){\line(3,4){1}}
        \put(32.7,7){\line(1,2){.8}}
        \put(34,10){\line(1,3){.5}}
        \put(34.7,12.7){\line(1,6){.2}}
        \put(35,15){\line(0,1){1}}
\qbezier(5,40)(20,20)(35,40)
\qbezier(5,15)(20,35)(35,15)
\put(20,55){\line(-1,0){1}}
        \put(17.6,54.8){\line(-4,-1){1}}
        \put(15,54.1){\line(-3,-1){1}}
        \put(12.5,53){\line(-2,-1){1}}
        \put(10.6,51.8){\line(-1,-1){.9}}
        \put(8.9,50.1){\line(-3,-4){1}}
        \put(7.3,48){\line(-1,-2){.8}}
        \put(6,45){\line(-1,-3){.5}}
        \put(5.3,42.3){\line(-1,-6){.2}}
        \put(5,40){\line(0,-1){1}}
        \put(20,55){\line(1,0){1}}
        \put(22.5,54.8){\line(4,-1){1}}
        \put(25,54.1){\line(3,-1){1}}
        \put(27.5,53){\line(2,-1){1}}
        \put(29.4,51.8){\line(1,-1){.9}}
        \put(31.1,50.1){\line(3,-4){1}}
        \put(32.7,48){\line(1,-2){.8}}
        \put(34,45){\line(1,-3){.5}}
        \put(34.7,42.3){\line(1,-6){.2}}
        \put(35,40){\line(0,-1){1}}
\end{picture}
    \end{minipage}\\
    \begin{minipage}{70pt}\label{f6}
        \begin{picture}(50,60)
        \put(20,25){$-$}
        \put(25,0){\line(-1,0){1}}
        \put(22.6,0.2){\line(-4,1){1}}
        \put(20,0.9){\line(-3,1){1}}
        \put(17.5,2){\line(-2,1){1}}
        \put(15.6,3.2){\line(-1,1){.9}}
        \put(13.9,4.9){\line(-3,4){1}}
        \put(12.3,7){\line(-1,2){.8}}
        \put(11,10){\line(-1,3){.5}}
        \put(10.3,12.7){\line(-1,6){.2}}
        \put(10,15){\line(0,1){1}}
        \put(25,0){\line(1,0){1}}
        \put(27.5,0.2){\line(4,1){1}}
        \put(30,0.9){\line(3,1){1}}
        \put(32.5,2){\line(2,1){1}}
        \put(34.4,3.2){\line(1,1){.9}}
        \put(36.1,4.9){\line(3,4){1}}
        \put(37.7,7){\line(1,2){.8}}
        \put(39,10){\line(1,3){.5}}
        \put(39.7,12.7){\line(1,6){.2}}
        \put(40,15){\line(0,1){1}}
    \qbezier(10,40)(25,25)(10,15)
\qbezier(40,15)(25,25)(40,40)
        \put(25,55){\line(-1,0){1}}
        \put(22.6,54.8){\line(-4,-1){1}}
        \put(20,54.1){\line(-3,-1){1}}
        \put(17.5,53){\line(-2,-1){1}}
        \put(15.6,51.8){\line(-1,-1){.9}}
        \put(13.9,50.1){\line(-3,-4){1}}
        \put(12.3,48){\line(-1,-2){.8}}
        \put(11,45){\line(-1,-3){.5}}
        \put(10.3,42.3){\line(-1,-6){.2}}
        \put(10,40){\line(0,-1){1}}
        \put(25,55){\line(1,0){1}}
        \put(27.5,54.8){\line(4,-1){1}}
        \put(30,54.1){\line(3,-1){1}}
        \put(32.5,53){\line(2,-1){1}}
        \put(34.4,51.8){\line(1,-1){.9}}
        \put(36.1,50.1){\line(3,-4){1}}
        \put(37.7,48){\line(1,-2){.8}}
        \put(39,45){\line(1,-3){.5}}
        \put(39.7,42.3){\line(1,-6){.2}}
        \put(40,40){\line(0,-1){1}}
        \end{picture}
    \end{minipage} 
&\begin{minipage}{80pt}
        \begin{picture}(70,60)
\put(0,24){\vector(1,0){50}}
\end{picture}
    \end{minipage}
\begin{minipage}{50pt}
        \begin{picture}(50,60)
        \put(15,10){$-$}
        \put(15,40){$+$}
\put(20,0){\line(-1,0){1}}
        \put(17.6,0.2){\line(-4,1){1}}
        \put(15,0.9){\line(-3,1){1}}
        \put(12.5,2){\line(-2,1){1}}
        \put(10.6,3.2){\line(-1,1){.9}}
        \put(8.9,4.9){\line(-3,4){1}}
        \put(7.3,7){\line(-1,2){.8}}
        \put(6,10){\line(-1,3){.5}}
        \put(5.3,12.7){\line(-1,6){.2}}
        \put(5,15){\line(0,1){1}}
        \put(20,0){\line(1,0){1}}
        \put(22.5,0.2){\line(4,1){1}}
        \put(25,0.9){\line(3,1){1}}
        \put(27.5,2){\line(2,1){1}}
        \put(29.4,3.2){\line(1,1){.9}}
        \put(31.1,4.9){\line(3,4){1}}
        \put(32.7,7){\line(1,2){.8}}
        \put(34,10){\line(1,3){.5}}
        \put(34.7,12.7){\line(1,6){.2}}
        \put(35,15){\line(0,1){1}}
\qbezier(5,40)(20,20)(35,40)
\qbezier(5,15)(20,35)(35,15)
\put(20,55){\line(-1,0){1}}
        \put(17.6,54.8){\line(-4,-1){1}}
        \put(15,54.1){\line(-3,-1){1}}
        \put(12.5,53){\line(-2,-1){1}}
        \put(10.6,51.8){\line(-1,-1){.9}}
        \put(8.9,50.1){\line(-3,-4){1}}
        \put(7.3,48){\line(-1,-2){.8}}
        \put(6,45){\line(-1,-3){.5}}
        \put(5.3,42.3){\line(-1,-6){.2}}
        \put(5,40){\line(0,-1){1}}
        \put(20,55){\line(1,0){1}}
        \put(22.5,54.8){\line(4,-1){1}}
        \put(25,54.1){\line(3,-1){1}}
        \put(27.5,53){\line(2,-1){1}}
        \put(29.4,51.8){\line(1,-1){.9}}
        \put(31.1,50.1){\line(3,-4){1}}
        \put(32.7,48){\line(1,-2){.8}}
        \put(34,45){\line(1,-3){.5}}
        \put(34.7,42.3){\line(1,-6){.2}}
        \put(35,40){\line(0,-1){1}}
\end{picture}
    \end{minipage}\quad\!
    \text{$+$}\quad\quad\! 
    \begin{minipage}{50pt}
        \begin{picture}(50,60)
        \put(15,10){$+$}
        \put(15,40){$-$}
\put(20,0){\line(-1,0){1}}
        \put(17.6,0.2){\line(-4,1){1}}
        \put(15,0.9){\line(-3,1){1}}
        \put(12.5,2){\line(-2,1){1}}
        \put(10.6,3.2){\line(-1,1){.9}}
        \put(8.9,4.9){\line(-3,4){1}}
        \put(7.3,7){\line(-1,2){.8}}
        \put(6,10){\line(-1,3){.5}}
        \put(5.3,12.7){\line(-1,6){.2}}
        \put(5,15){\line(0,1){1}}
        \put(20,0){\line(1,0){1}}
        \put(22.5,0.2){\line(4,1){1}}
        \put(25,0.9){\line(3,1){1}}
        \put(27.5,2){\line(2,1){1}}
        \put(29.4,3.2){\line(1,1){.9}}
        \put(31.1,4.9){\line(3,4){1}}
        \put(32.7,7){\line(1,2){.8}}
        \put(34,10){\line(1,3){.5}}
        \put(34.7,12.7){\line(1,6){.2}}
        \put(35,15){\line(0,1){1}}
\qbezier(5,40)(20,20)(35,40)
\qbezier(5,15)(20,35)(35,15)
\put(20,55){\line(-1,0){1}}
        \put(17.6,54.8){\line(-4,-1){1}}
        \put(15,54.1){\line(-3,-1){1}}
        \put(12.5,53){\line(-2,-1){1}}
        \put(10.6,51.8){\line(-1,-1){.9}}
        \put(8.9,50.1){\line(-3,-4){1}}
        \put(7.3,48){\line(-1,-2){.8}}
        \put(6,45){\line(-1,-3){.5}}
        \put(5.3,42.3){\line(-1,-6){.2}}
        \put(5,40){\line(0,-1){1}}
        \put(20,55){\line(1,0){1}}
        \put(22.5,54.8){\line(4,-1){1}}
        \put(25,54.1){\line(3,-1){1}}
        \put(27.5,53){\line(2,-1){1}}
        \put(29.4,51.8){\line(1,-1){.9}}
        \put(31.1,50.1){\line(3,-4){1}}
        \put(32.7,48){\line(1,-2){.8}}
        \put(34,45){\line(1,-3){.5}}
        \put(34.7,42.3){\line(1,-6){.2}}
        \put(35,40){\line(0,-1){1}}
\end{picture}
    \end{minipage}
    \text{$-$}\quad\quad\! 
    \begin{minipage}{50pt}
        \begin{picture}(50,60)
        \put(-9,24){$s$}
        \put(15,10){$-$}
        \put(15,40){$-$}
\put(20,0){\line(-1,0){1}}
        \put(17.6,0.2){\line(-4,1){1}}
        \put(15,0.9){\line(-3,1){1}}
        \put(12.5,2){\line(-2,1){1}}
        \put(10.6,3.2){\line(-1,1){.9}}
        \put(8.9,4.9){\line(-3,4){1}}
        \put(7.3,7){\line(-1,2){.8}}
        \put(6,10){\line(-1,3){.5}}
        \put(5.3,12.7){\line(-1,6){.2}}
        \put(5,15){\line(0,1){1}}
        \put(20,0){\line(1,0){1}}
        \put(22.5,0.2){\line(4,1){1}}
        \put(25,0.9){\line(3,1){1}}
        \put(27.5,2){\line(2,1){1}}
        \put(29.4,3.2){\line(1,1){.9}}
        \put(31.1,4.9){\line(3,4){1}}
        \put(32.7,7){\line(1,2){.8}}
        \put(34,10){\line(1,3){.5}}
        \put(34.7,12.7){\line(1,6){.2}}
        \put(35,15){\line(0,1){1}}
\qbezier(5,40)(20,20)(35,40)
\qbezier(5,15)(20,35)(35,15)
\put(20,55){\line(-1,0){1}}
        \put(17.6,54.8){\line(-4,-1){1}}
        \put(15,54.1){\line(-3,-1){1}}
        \put(12.5,53){\line(-2,-1){1}}
        \put(10.6,51.8){\line(-1,-1){.9}}
        \put(8.9,50.1){\line(-3,-4){1}}
        \put(7.3,48){\line(-1,-2){.8}}
        \put(6,45){\line(-1,-3){.5}}
        \put(5.3,42.3){\line(-1,-6){.2}}
        \put(5,40){\line(0,-1){1}}
        \put(20,55){\line(1,0){1}}
        \put(22.5,54.8){\line(4,-1){1}}
        \put(25,54.1){\line(3,-1){1}}
        \put(27.5,53){\line(2,-1){1}}
        \put(29.4,51.8){\line(1,-1){.9}}
        \put(31.1,50.1){\line(3,-4){1}}
        \put(32.7,48){\line(1,-2){.8}}
        \put(34,45){\line(1,-3){.5}}
        \put(34.7,42.3){\line(1,-6){.2}}
        \put(35,40){\line(0,-1){1}}
\end{picture}
    \end{minipage}\\
\begin{minipage}{70pt}\label{f7}
  \begin{picture}(90,60)
  \put(5,25){$p$}
            \qbezier(10,40)(25,25)(10,15)
\qbezier(40,15)(25,25)(40,40)
\put(40,25){$q$}
\end{picture}
    \end{minipage}
&\begin{minipage}{80pt}
\begin{picture}(80,60)
        \put(0,24){\vector(1,0){50}}
        \end{picture}
    \end{minipage}\quad
    \begin{minipage}{70pt}
  \begin{picture}(80,60)
  \put(10,42){$p:q$}
        \qbezier(5,40)(20,25)(35,40)
\qbezier(5,15)(20,30)(35,15)
\put(10,10){$q:p$}
\end{picture}
    \end{minipage}
\end{align}
\caption{A generalised Frobenius calculus of signed circles.  }\label{frobenius}
\end{figure}
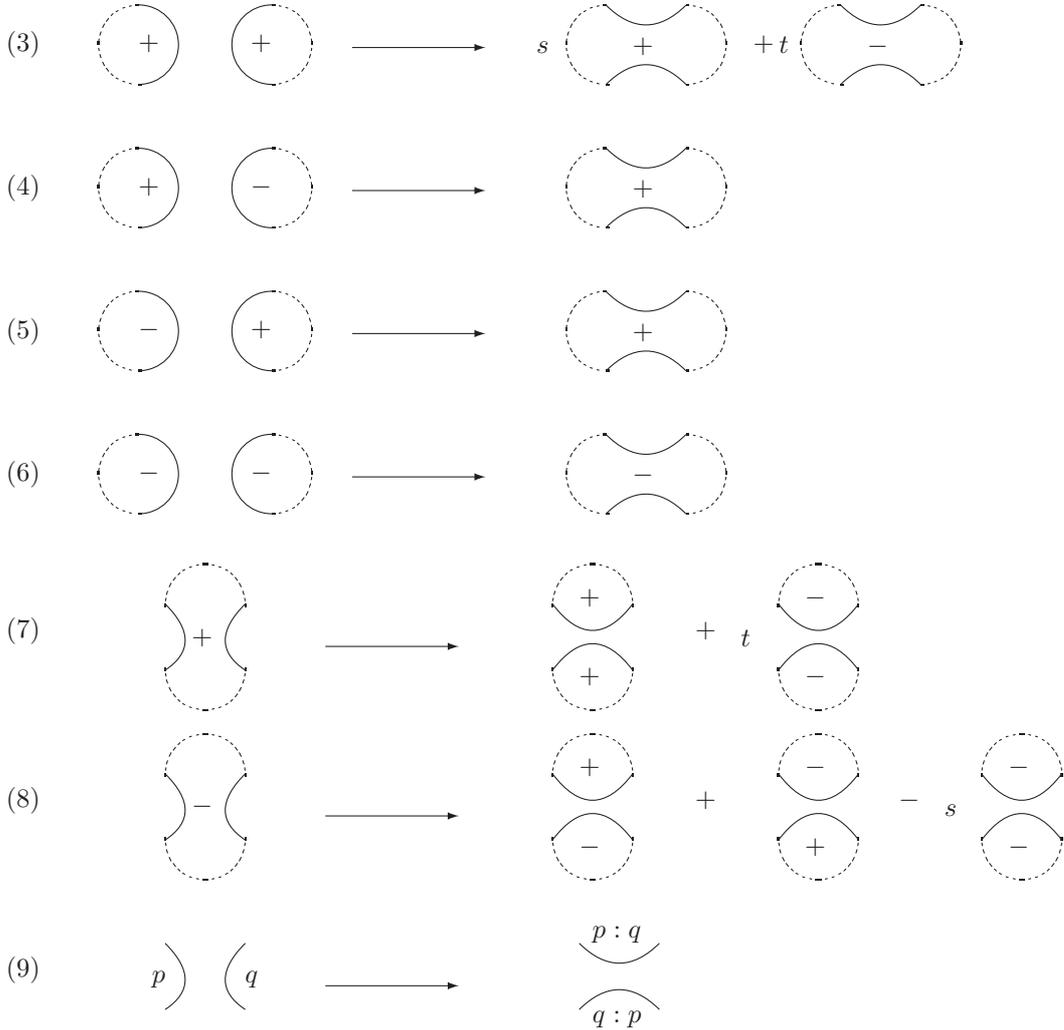

\section{Reidemeister invariance for the differential $\delta_{s, t}$.  }\label{reide}

\subsection{On the first Reidemeister invariance.  }\label{1st}
The first Reidemeister move is $D'$ $=$ $\begin{minipage}{30pt}
        \begin{picture}(30,30)
        \put(-2,13.5){$a$}
\qbezier(6.6,18)(0,25)(0,25)
\qbezier(0,5)(20,35)(25,18)
\qbezier(11,14)(20,5)(24.5,13)
\qbezier(24.5,13)(25.5,15.5)(25,18)
        \end{picture}
    \end{minipage}$ $\stackrel{1}{\sim}$ $\begin{minipage}{30pt}
        \begin{picture}(30,30)
        \cbezier(0,3)(30,5)(30,25)(0,27) 
        \end{picture}
    \end{minipage} $ $=$ $D$, we consider the composition 

\begin{equation}
\mathcal{C}(D') = \mathcal{C} \oplus \mathcal{C}_{\rm{contr}} \stackrel{\rho_{1}}{\to} \mathcal{C} \stackrel{\operatorname{isom}}{\to} \mathcal{C}(D)
\end{equation} 
where $a$ is a crossing and $\mathcal{C}$, $\mathcal{C}_{\rm{contr}}$, $\rho_{1}$ and the isomorphism are defined in the following formulas (\ref{1st-sum})--(\ref{1st-iso}).  

First, 
\begin{equation}\label{1st-sum}
\begin{split}
&\mathcal{C} := \mathcal{C} \left(~
    \begin{minipage}{30pt}
        \begin{picture}(30,30)
\qbezier(6.6,17)(0,24)(0,24)
\qbezier(0,4)(20,34)(25,17)
\qbezier(11,13)(20,4)(24.5,12)
\qbezier(24.5,12)(25.5,14.5)(25,17)
{\color{blue}{\put(8,19.5){\circle*{3}}
\put(8,10.5){\circle*{3}}
\put(8,11.5){\line(0,1){8}}}}
\put(-3.5,12.5){\text{$p$}}
\put(14,12.5){\text{$+$}}
        \end{picture}
    \end{minipage}\!\! \otimes [x] 
- \operatorname{m}(p:+) \begin{minipage}{30pt}
        \begin{picture}(30,30)
\qbezier(6.6,17)(0,24)(0,24)
\qbezier(0,4)(20,34)(25,17)
\qbezier(11,13)(20,4)(24.5,12)
\qbezier(24.5,12)(25.5,14.5)(25,17)
{\color{blue}{\put(8,19.5){\circle*{3}}
\put(8,10.5){\circle*{3}}
\put(8,11.5){\line(0,1){8}}}}
\put(14,12.5){\text{$-$}}
        \end{picture}
    \end{minipage} \!\! \otimes [x]
   \right), \\
   &\mathcal{C}_{\rm{contr}} := \mathcal{C}\left(~ 
\begin{minipage}{30pt}
        \begin{picture}(30,30)
\qbezier(6.6,17)(0,24)(0,24)
\qbezier(0,4)(20,34)(25,17)
\qbezier(11,13)(20,4)(24.5,12)
\qbezier(24.5,12)(25.5,14.5)(25,17)
{\color{blue}{\put(8,19.5){\circle*{3}}
\put(8,10.5){\circle*{3}}
\put(8,11.5){\line(0,1){8}}}}
\put(-3.5,12.5){\text{$p$}}
\put(14,12.5){\text{$-$}}
        \end{picture}
    \end{minipage} \!\! \otimes [x],   
    \begin{minipage}{30pt}
        \begin{picture}(30,30)
\qbezier(6.6,17)(0,24)(0,24)
\qbezier(0,4)(20,34)(25,17)
\qbezier(11,13)(20,4)(24.5,12)
\qbezier(24.5,12)(25.5,14.5)(25,17)
{\color{red}{\put(12.5,15.5){\circle*{3}}
\put(4.5,15.5){\circle*{3}}
\put(4.5,15.5){\line(1,0){8}}}}
\put(15.5,14){\text{$p$}}
        \end{picture}
    \end{minipage} \!\! \otimes [xa] \right).  
\end{split}
\end{equation}

Second, the retraction 
$\rho_{1} : \mathcal{C}\left(~
    \begin{minipage}{30pt}
        \begin{picture}(30,30)
\qbezier(6.6,18)(0,25)(0,25)
\qbezier(0,5)(20,35)(25,18)
\qbezier(11,14)(20,5)(24.5,13)
\qbezier(24.5,13)(25.5,15.5)(25,18)
        \end{picture}
    \end{minipage}
\right)$ $\to$ 
$ 
\mathcal{C}\left(~\begin{minipage}{30pt}
        \begin{picture}(30,30)
\qbezier(6.6,17)(0,24)(0,24)
\qbezier(0,4)(20,34)(25,17)
\qbezier(11,13)(20,4)(24.5,12)
\qbezier(24.5,12)(25.5,14.5)(25,17)
{\color{blue}{\put(8,19.5){\circle*{3}}
\put(8,10.5){\circle*{3}}
\put(8,11.5){\line(0,1){8}}}}
\put(-3.5,12.5){\text{$p$}}
\put(14,12.5){\text{$+$}}
        \end{picture}
    \end{minipage}\!\! \otimes [x] 
- \operatorname{m}(p:+) \begin{minipage}{30pt}
        \begin{picture}(30,30)
\qbezier(6.6,17)(0,24)(0,24)
\qbezier(0,4)(20,34)(25,17)
\qbezier(11,13)(20,4)(24.5,12)
\qbezier(24.5,12)(25.5,14.5)(25,17)
{\color{blue}{\put(8,19.5){\circle*{3}}
\put(8,10.5){\circle*{3}}
\put(8,11.5){\line(0,1){8}}}}
\put(14,12.5){\text{$-$}}
        \end{picture}
    \end{minipage} \!\! \otimes [x]
    \right)$

is defined by the formulas

\begin{equation}\label{1st-ret}
\begin{split}
&\begin{minipage}{30pt}
        \begin{picture}(30,30)
        \qbezier(6.6,17)(0,24)(0,24)
\qbezier(0,4)(20,34)(25,17)
\qbezier(11,13)(20,4)(24.5,12)
\qbezier(24.5,12)(25.5,14.5)(25,17)
{\color{blue}{\put(8,19.5){\circle*{3}}
\put(8,10.5){\circle*{3}}
\put(8,11.5){\line(0,1){8}}}}
\put(-3.5,12.5){\text{$p$}}
\put(14,12.5){\text{$+$}}
        \end{picture}
    \end{minipage} \!\! \otimes [x] \mapsto \ 
\begin{minipage}{30pt}
        \begin{picture}(30,30)
\qbezier(6.6,17)(0,24)(0,24)
\qbezier(0,4)(20,34)(25,17)
\qbezier(11,13)(20,4)(24.5,12)
\qbezier(24.5,12)(25.5,14.5)(25,17)
{\color{blue}{\put(8,19.5){\circle*{3}}
\put(8,10.5){\circle*{3}}
\put(8,11.5){\line(0,1){8}}}}
\put(-3.5,12.5){\text{$p$}}
\put(14,12.5){\text{$+$}}
        \end{picture}
    \end{minipage}\!\! \otimes [x] 
- \operatorname{m}(p:+) \begin{minipage}{30pt}
        \begin{picture}(30,30)
\qbezier(6.6,17)(0,24)(0,24)
\qbezier(0,4)(20,34)(25,17)
\qbezier(11,13)(20,4)(24.5,12)
\qbezier(24.5,12)(25.5,14.5)(25,17)
{\color{blue}{\put(8,19.5){\circle*{3}}
\put(8,10.5){\circle*{3}}
\put(8,11.5){\line(0,1){8}}}}
\put(14,12.5){\text{$-$}}
        \end{picture}
    \end{minipage} \!\! \otimes [x], \\
&    \begin{minipage}{30pt}
        \begin{picture}(30,30)
\qbezier(6.6,17)(0,24)(0,24)
\qbezier(0,4)(20,34)(25,17)
\qbezier(11,13)(20,4)(24.5,12)
\qbezier(24.5,12)(25.5,14.5)(25,17)
{\color{blue}{\put(8,19.5){\circle*{3}}
\put(8,10.5){\circle*{3}}
\put(8,11.5){\line(0,1){8}}}}
\put(-3.5,12.5){\text{$p$}}
\put(14,12.5){\text{$-$}}
        \end{picture}
    \end{minipage} \!\! \otimes [x],~\   
    \begin{minipage}{30pt}
        \begin{picture}(30,30)
\qbezier(6.6,17)(0,24)(0,24)
\qbezier(0,4)(20,34)(25,17)
\qbezier(11,13)(20,4)(24.5,12)
\qbezier(24.5,12)(25.5,14.5)(25,17)
{\color{red}{\put(12.5,15.5){\circle*{3}}
\put(4.5,15.5){\circle*{3}}
\put(4.5,15.5){\line(1,0){8}}}}
\put(16,14){\text{$p$}}
        \end{picture}
    \end{minipage} \!\! \otimes [xa] \mapsto \ 0.  
\end{split}
\end{equation}
It is easy to see that $\delta_{s, t} \circ \rho_{1}$ $=$ $\rho_{1} \circ \delta_{s, t}$.  Then $\rho_{1}$ is certainly a chain map.  Note that we have $\delta_{s, t} \circ \rho_{1}$ $=$ $\rho_{1} \circ \delta_{s, t}$ using only (\ref{dif}) and not using the property (\ref{f1})--(\ref{f6}).

Third, the isomorphism 
\begin{equation}\label{1st-iso1}
\begin{split}
&\mathcal{C}\left(~
    \begin{minipage}{30pt}
        \begin{picture}(30,30)
\qbezier(6.6,17)(0,24)(0,24)
\qbezier(0,4)(20,34)(25,17)
\qbezier(11,13)(20,4)(24.5,12)
\qbezier(24.5,12)(25.5,14.5)(25,17)
{\color{blue}{\put(8,19.5){\circle*{3}}
\put(8,10.5){\circle*{3}}
\put(8,11.5){\line(0,1){8}}}}
\put(-3.5,12.5){\text{$p$}}
\put(14,12.5){\text{$+$}}
        \end{picture}
    \end{minipage}\!\! \otimes [x] 
- \operatorname{m}(p:+) \begin{minipage}{30pt}
        \begin{picture}(30,30)
\qbezier(6.6,17)(0,24)(0,24)
\qbezier(0,4)(20,34)(25,17)
\qbezier(11,13)(20,4)(24.5,12)
\qbezier(24.5,12)(25.5,14.5)(25,17)
{\color{blue}{\put(8,19.5){\circle*{3}}
\put(8,10.5){\circle*{3}}
\put(8,11.5){\line(0,1){8}}}}
\put(14,12.5){\text{$-$}}
        \end{picture}
    \end{minipage} \!\! \otimes [x] \right) 
    \to 
    \mathcal{C}\left(~
    \begin{minipage}{30pt}
        \begin{picture}(30,30)
        \cbezier(0,3)(30,5)(30,25)(0,27) 
        \end{picture}
    \end{minipage} \!\! \otimes [x]
    \right) 
    \end{split}
        \end{equation}
    
    is defined by the formulas 
    
\begin{equation}\label{1st-iso}
\begin{split}
&\begin{minipage}{30pt}
        \begin{picture}(30,30)
\qbezier(6.6,17)(0,24)(0,24)
\qbezier(0,4)(20,34)(25,17)
\qbezier(11,13)(20,4)(24.5,12)
\qbezier(24.5,12)(25.5,14.5)(25,17)
{\color{blue}{\put(8,19.5){\circle*{3}}
\put(8,10.5){\circle*{3}}
\put(8,11.5){\line(0,1){8}}}}
\put(-3.5,12.5){\text{$p$}}
\put(14,12.5){\text{$+$}}
        \end{picture}
    \end{minipage}\!\! \otimes [x] 
- \operatorname{m}(p:+) \begin{minipage}{30pt}
        \begin{picture}(30,30)
\qbezier(6.6,17)(0,24)(0,24)
\qbezier(0,4)(20,34)(25,17)
\qbezier(11,13)(20,4)(24.5,12)
\qbezier(24.5,12)(25.5,14.5)(25,17)
{\color{blue}{\put(8,19.5){\circle*{3}}
\put(8,10.5){\circle*{3}}
\put(8,11.5){\line(0,1){8}}}}
\put(14,12.5){\text{$-$}}
        \end{picture}
    \end{minipage} \!\! \otimes [x] \mapsto~\ 
    \begin{minipage}{30pt}
        \begin{picture}(30,30)
        \cbezier(0,3)(30,5)(30,25)(0,27) 
        \put(5,12.5){\text{$p$}}
        \end{picture}
    \end{minipage} \!\! \otimes [x].  
        \end{split}
        \end{equation}  
The homotopy connecting $\operatorname{in}$ $\circ$ $\rho_{1}$ to the identity $:$ $\mathcal{C}\left(~
    \begin{minipage}{30pt}
        \begin{picture}(30,30)
\qbezier(6.6,18)(0,25)(0,25)
\qbezier(0,5)(20,35)(25,18)
\qbezier(11,14)(20,5)(24.5,13)
\qbezier(24.5,13)(25.5,15.5)(25,18)
        \end{picture}
    \end{minipage}
\right)$ $\to$ 
$\mathcal{C}\left(~
    \begin{minipage}{30pt}
        \begin{picture}(30,30)
\qbezier(6.6,18)(0,25)(0,25)
\qbezier(0,5)(20,35)(25,18)
\qbezier(11,14)(20,5)(24.5,13)
\qbezier(24.5,13)(25.5,15.5)(25,18)
        \end{picture}
    \end{minipage}
\right)$ such that $\delta_{s, t}$ $\circ$ $h_{1}$ $+$  $h_{1}$ $\circ$ $\delta_{s, t}$ $=$ $\operatorname{id} - \operatorname{in} \circ \rho_{1}$, is defined by the formulas: 
\begin{equation}\label{first-hom}
\begin{split}
\begin{minipage}{30pt}
        \begin{picture}(30,30)
\qbezier(6.6,17)(0,24)(0,24)
\qbezier(0,4)(20,34)(25,17)
\qbezier(11,13)(20,4)(24.5,12)
\qbezier(24.5,12)(25.5,14.5)(25,17)
{\color{red}{\put(12.5,15.5){\circle*{3}}
\put(4.5,15.5){\circle*{3}}
\put(4.5,15.5){\line(1,0){8}}}}
\put(16,14){\text{$p$}}
        \end{picture}
    \end{minipage} \!\! \otimes [xa] &\mapsto~ 
    \begin{minipage}{30pt}
        \begin{picture}(30,30)
\qbezier(6.6,17)(0,24)(0,24)
\qbezier(0,4)(20,34)(25,17)
\qbezier(11,13)(20,4)(24.5,12)
\qbezier(24.5,12)(25.5,14.5)(25,17)
{\color{blue}{\put(8,19.5){\circle*{3}}
\put(8,10.5){\circle*{3}}
\put(8,11.5){\line(0,1){8}}}}
\put(-3.5,12.5){\text{$p$}}
\put(14,12.5){\text{$-$}}
        \end{picture}
    \end{minipage} \!\! \otimes [x],~
    {\text{otherwise}} \mapsto 0.  
\end{split}
\end{equation}
\begin{rem}
The explicit formula (\ref{first-hom}) of the homotopy map $h_{1}$ in the case ($s$ $=$ $t$ $=$ $0$) of the original Khovanov homology is given by Oleg Viro \cite[Subsection 5.5]{viro}.  
\end{rem}
We can verify $\delta_{s, t}$ $\circ$ $h_{1}$ $+$  $h_{1}$ $\circ$ $\delta_{s, t}$ $=$ $\operatorname{id} - \operatorname{in} \circ \rho_{1}$ by a direct computation as follows.  
\begin{equation}\label{first-begin}
\begin{split}
\left(h_{1} \circ \delta_{s, t} + \delta_{s, t} \circ h_{1} \right)\left(
\begin{minipage}{30pt}
        \begin{picture}(30,30)
\qbezier(6.6,17)(0,24)(0,24)
\qbezier(0,4)(20,34)(25,17)
\qbezier(11,13)(20,4)(24.5,12)
\qbezier(24.5,12)(25.5,14.5)(25,17)
{\color{blue}{\put(8,19.5){\circle*{3}}
\put(8,10.5){\circle*{3}}
\put(8,11.5){\line(0,1){8}}}}
\put(-3.5,12.5){\text{$p$}}
\put(14,12.5){\text{$+$}}
        \end{picture}
    \end{minipage} \!\! \otimes [x]\right) 
    &= h_{1}\left( \operatorname{m}(p:+) \begin{minipage}{30pt}
        \begin{picture}(30,30)
\qbezier(6.6,17)(0,24)(0,24)
\qbezier(0,4)(20,34)(25,17)
\qbezier(11,13)(20,4)(24.5,12)
\qbezier(24.5,12)(25.5,14.5)(25,17)
{\color{red}{\put(12.5,15.5){\circle*{3}}
\put(4.5,15.5){\circle*{3}}
\put(4.5,15.5){\line(1,0){8}}}}
        \end{picture}
    \end{minipage} \!\! \otimes [xa]
      \right)\\
    &= \operatorname{m}(p:+) \begin{minipage}{30pt}
        \begin{picture}(30,30)
\qbezier(6.6,17)(0,24)(0,24)
\qbezier(0,4)(20,34)(25,17)
\qbezier(11,13)(20,4)(24.5,12)
\qbezier(24.5,12)(25.5,14.5)(25,17)
{\color{blue}{\put(8,19.5){\circle*{3}}
\put(8,10.5){\circle*{3}}
\put(8,11.5){\line(0,1){8}}}}
\put(14,12.5){\text{$-$}}
        \end{picture}
    \end{minipage} \!\! \otimes [x]\\
    &= \left( \operatorname{id} - \rho_{1} \right)\left(
    \begin{minipage}{30pt}
        \begin{picture}(30,30)
\qbezier(6.6,17)(0,24)(0,24)
\qbezier(0,4)(20,34)(25,17)
\qbezier(11,13)(20,4)(24.5,12)
\qbezier(24.5,12)(25.5,14.5)(25,17)
{\color{blue}{\put(8,19.5){\circle*{3}}
\put(8,10.5){\circle*{3}}
\put(8,11.5){\line(0,1){8}}}}
\put(-3.5,12.5){\text{$p$}}
\put(14,12.5){\text{$+$}}
        \end{picture}
    \end{minipage} \!\! \otimes [x]
    \right).  
    \end{split}
    \end{equation}
    Similarly, 
  \begin{equation}
\begin{split}
\left(h_{1} \circ \delta_{s, t} + \delta_{s, t} \circ h_{1} \right)\left(
\begin{minipage}{30pt}
        \begin{picture}(30,30)
\qbezier(6.6,17)(0,24)(0,24)
\qbezier(0,4)(20,34)(25,17)
\qbezier(11,13)(20,4)(24.5,12)
\qbezier(24.5,12)(25.5,14.5)(25,17)
{\color{blue}{\put(8,19.5){\circle*{3}}
\put(8,10.5){\circle*{3}}
\put(8,11.5){\line(0,1){8}}}}
\put(-3.5,12.5){\text{$p$}}
\put(14,12.5){\text{$-$}}
        \end{picture}
    \end{minipage} \!\! \otimes [x]\right) 
    &= h_{1}\left( \begin{minipage}{30pt}
        \begin{picture}(30,30)
\qbezier(6.6,17)(0,24)(0,24)
\qbezier(0,4)(20,34)(25,17)
\qbezier(11,13)(20,4)(24.5,12)
\qbezier(24.5,12)(25.5,14.5)(25,17)
{\color{red}{\put(12.5,15.5){\circle*{3}}
\put(4.5,15.5){\circle*{3}}
\put(4.5,15.5){\line(1,0){8}}}}
\put(16,14){\text{$p$}}
        \end{picture}
    \end{minipage} \!\! \otimes [xa] \right)\\
    &= \quad \begin{minipage}{30pt}
        \begin{picture}(30,30)
\qbezier(6.6,17)(0,24)(0,24)
\qbezier(0,4)(20,34)(25,17)
\qbezier(11,13)(20,4)(24.5,12)
\qbezier(24.5,12)(25.5,14.5)(25,17)
{\color{blue}{\put(8,19.5){\circle*{3}}
\put(8,10.5){\circle*{3}}
\put(8,11.5){\line(0,1){8}}}}
\put(-3.5,12.5){\text{$p$}}
\put(14,12.5){\text{$-$}}
        \end{picture}
    \end{minipage} \!\! \otimes [x]\\
    &= \left( \operatorname{id} - \rho_{1} \right)\left(
    \begin{minipage}{30pt}
        \begin{picture}(30,30)
\qbezier(6.6,17)(0,24)(0,24)
\qbezier(0,4)(20,34)(25,17)
\qbezier(11,13)(20,4)(24.5,12)
\qbezier(24.5,12)(25.5,14.5)(25,17)
{\color{blue}{\put(8,19.5){\circle*{3}}
\put(8,10.5){\circle*{3}}
\put(8,11.5){\line(0,1){8}}}}
\put(-3.5,12.5){\text{$p$}}
\put(14,12.5){\text{$-$}}
        \end{picture}
    \end{minipage} \!\! \otimes [x]
    \right),   
    \end{split}
    \end{equation}
        \begin{equation}\label{1st-last}
\left(h_{1} \circ \delta_{s, t} + \delta_{s, t} \circ h_{1} \right)\left(\begin{minipage}{30pt}
        \begin{picture}(30,30)
\qbezier(6.6,17)(0,24)(0,24)
\qbezier(0,4)(20,34)(25,17)
\qbezier(11,13)(20,4)(24.5,12)
\qbezier(24.5,12)(25.5,14.5)(25,17)
{\color{red}{\put(12.5,15.5){\circle*{3}}
\put(4.5,15.5){\circle*{3}}
\put(4.5,15.5){\line(1,0){8}}}}
\put(16,14){\text{$p$}}
        \end{picture}
    \end{minipage} \!\! \otimes [xa]
\right) 
    = \begin{minipage}{30pt}
        \begin{picture}(30,30)
\qbezier(6.6,17)(0,24)(0,24)
\qbezier(0,4)(20,34)(25,17)
\qbezier(11,13)(20,4)(24.5,12)
\qbezier(24.5,12)(25.5,14.5)(25,17)
{\color{red}{\put(12.5,15.5){\circle*{3}}
\put(4.5,15.5){\circle*{3}}
\put(4.5,15.5){\line(1,0){8}}}}
\put(16,14){\text{$p$}}
        \end{picture}
    \end{minipage} \!\! \otimes [xa]
    = \left( \operatorname{id} - \rho_{1} \right)\left(
    \begin{minipage}{30pt}
        \begin{picture}(30,30)
\qbezier(6.6,17)(0,24)(0,24)
\qbezier(0,4)(20,34)(25,17)
\qbezier(11,13)(20,4)(24.5,12)
\qbezier(24.5,12)(25.5,14.5)(25,17)
{\color{red}{\put(12.5,15.5){\circle*{3}}
\put(4.5,15.5){\circle*{3}}
\put(4.5,15.5){\line(1,0){8}}}}
\put(16,14){\text{$p$}}
        \end{picture}
    \end{minipage} \!\! \otimes [xa]
\right).  
\end{equation}

\subsection{On the second Reidemeister invariance.  }\label{2nd}
In this section,  consider the second Reidemeister move of Khovanov homology for $\delta_{s,t}$.  
In \cite[Subsection 2.2]{ito}, a chain homotopy map, a retraction and an isomorphism for the second Reidemeister invariance of Khovanov homology for $\delta_{0, 0}$ were given.  

We extend the chain homotopy map, the retraction and the isomorphism for $\delta_{0, 0}$ to maps $h_{2}$, $\rho_{2}$ and $\operatorname{isom}_{2}$ for $\delta_{s, t}$ by replacing $p:q$ and $q:p$ defined \cite[Figure 3]{jacobsson} with the ``generalised" $p:q$ and $q:p$ defined Figure \ref{frobenius}.  We will show that these extended maps become the chain homotopy map, the retraction and the isomorphism for the second Reidemeister invariance of Khovanov homology for $\delta_{s, t}$.  

Let $a$, $b$ be crossings, $x$ sequence of crossings with negative markers and $p$, $q$ be signs.  For a crossing with no markers or no signs in the following formulas, any markers or signs are allowed.  Let $\tilde{p}$ be $p$ unless the upper arc is connected to one of the other arcs in the picture.  

For a link diagram $D'$ $=$ $\begin{minipage}{30pt}
        \begin{picture}(30,30)
            \qbezier(0,0)(40,15)(0,30)
            \qbezier(11,21)(1,15)(11,9)
            \qbezier(18,24)(24,27)(30,30)
            \qbezier(18,5)(24,2.5)(30,0)
            \put(3,21){$a$}
            \put(3,5){$b$}
        \end{picture}
    \end{minipage}
$, let $S_{++}(p,q)$ be $\begin{minipage}{30pt}
        \begin{picture}(30,30)
            \qbezier(0,0)(40,15)(0,30)
            \qbezier(11,21)(1,15)(11,9)
            \qbezier(18,24)(24,27)(30,30)
            \qbezier(18,5)(24,2.5)(30,0)
            {\color{blue}
            {\put(8,22.5){\circle*{3}}
\put(20,22.5){\circle*{3}}
\put(9,22.5){\line(1,0){10}}
            }}
            {\color{blue}{\put(10,12){\circle*{3}}
\put(10,3){\circle*{3}}
\put(10,4){\line(0,1){8}}}}
\put(7,30){$p$}
\put(7,-5){$q$}
        \end{picture}
    \end{minipage}$, $S_{-+}(p,q)$ be\ \ 
    $\begin{minipage}{30pt}
        \begin{picture}(40,40)
            \qbezier(0,5)(40,20)(0,35)
            \qbezier(11,26)(1,20)(11,14)
            \qbezier(18,29)(24,32)(30,35)
            \qbezier(18,10)(24,7.5)(30,5)
  {\color{red}{\put(14,33){\circle*{3}}
\put(14,24){\circle*{3}}
\put(14,24){\line(0,1){8}}}}
            {\color{blue}{\put(10,17){\circle*{3}}
\put(10,8){\circle*{3}}
\put(10,9){\line(0,1){8}}}}
\put(-5,19){$p$}
\put(18,19){$q$}
        \end{picture} 
        \end{minipage}$, $S_{+-,-}(p,q)$ be 
        $\begin{minipage}{30pt}
        \begin{picture}(30,37)
            \qbezier(0,5)(40,20)(0,35)
            \qbezier(11,26)(1,20)(11,14)
            \qbezier(18,29)(24,32)(30,35)
            \qbezier(18,10)(24,7.5)(30,5)
            {\color{blue}
            {\put(8,27.5){\circle*{3}}
\put(20,27.5){\circle*{3}}
\put(9,27.5){\line(1,0){10}}
            }}
            {\color{red}{\put(16,12){\circle*{3}}
\put(5,12){\circle*{3}}
\put(5,12){\line(1,0){10}}}}
\put(6,17){\text{$-$}}
\put(7,35){$p$}
\put(7,0){$q$}
        \end{picture}
    \end{minipage}$, $S_{+-}(p, q)$ be 
    $\begin{minipage}{30pt}
        \begin{picture}(30,40)
            \qbezier(0,5)(40,20)(0,35)
            \qbezier(11,26)(1,20)(11,14)
            \qbezier(18,29)(24,32)(30,35)
            \qbezier(18,10)(24,7.5)(30,5)
            {\color{blue}
            {\put(8,27.5){\circle*{3}}
\put(20,27.5){\circle*{3}}
\put(9,27.5){\line(1,0){10}}
            }}
            {\color{red}{\put(16,12){\circle*{3}}
\put(5,12){\circle*{3}}
\put(5,12){\line(1,0){10}}}}
\put(6,17){\text{$+$}}
\put(7,35){$p$}
\put(7,0){$q$}
        \end{picture}
    \end{minipage}$ and $S_{--}(p,q)$ be 
    $\begin{minipage}{30pt}
        \begin{picture}(30,30)
            \qbezier(0,0)(40,15)(0,30)
            \qbezier(11,21)(1,15)(11,9)
            \qbezier(18,24)(24,27)(30,30)
            \qbezier(18,5)(24,2.5)(30,0)
  {\color{red}{\put(14,28){\circle*{3}}
\put(14,19){\circle*{3}}
\put(14,19){\line(0,1){8}}}}
            {\color{red}{\put(16,7){\circle*{3}}
\put(5,7){\circle*{3}}
\put(5,7){\line(1,0){10}}}}
\put(7,34){$p$}
\put(7,-3){$q$}
        \end{picture}
    \end{minipage}$.  

The second Reidemeister move is $D'$ $\stackrel{2}{\sim}$ $\begin{minipage}{30pt}
        \begin{picture}(30,30)
            \qbezier(0,0)(20,15)(0,30)
            \qbezier(30,0)(10,15)(30,30)
        \end{picture}
    \end{minipage}$ $=$ $D$, we consider the composition 

\begin{equation}
\mathcal{C}(D') = \mathcal{C} \oplus \mathcal{C}_{\rm{contr}} \stackrel{\rho_{2}}{\to} \mathcal{C} \stackrel{\operatorname{isom_{2}}}{\to} \mathcal{C}(D)
\end{equation} 
where $\mathcal{C}$ $\oplus$ $\mathcal{C}_{\rm{contr}}$, $\rho_{2}$ and $\operatorname{isom}_{2}$ are defined in \cite[Section 2.1]{ito} by replacing $p:q$ and $q:p$ in \cite[Section 2.1]{ito} with the ``generalised" $p:q$ and $q:p$ defined Figure \ref{frobenius}.  

Let $h_{2}$ be the chain homotopy maps given by \cite[Section 2.1, Equation (7)]{ito}.  To verify the second Reidemeister invariance of $\mathcal{H}^{i}(D)$, it is sufficient to show that $\delta_{s, t} \circ \rho_{2}$ $=$ $\rho_{2} \circ \delta_{s, t}$ and $\delta_{s, t}$ $\circ$ $h_{2}$ $+$  $h_{2}$ $\circ$ $\delta_{s, t}$ $=$ $\operatorname{id} - \operatorname{in} \circ \rho_{2}$.  In the following, we reveal where are non-trivial parts in the proof of these two equation and figure out a good way to avoid the ``hard" calculation.  

$\bullet$ On the proof of the $\delta_{s, t} \circ \rho_{2}$ $=$ $\rho_{2} \circ \delta_{s, t}$.  

We denote a big sum of the states $\sum_{u} S \otimes [xu]$ by $\mathcal{S}^{u}$ where $u$ ranges over crossings of $D'$ except for $a$ and $b$.   

First, it is easy to see that we have $\delta_{s, t} \circ \rho_{2}(S_{+-,-}(p, q))$ $=$ $(\rho_{2} \circ \delta_{s, t})(S_{+-, -}(p, q))$ $=$ $0$ depending only (\ref{dif}); $\delta_{s, t} \circ \rho_{2}(S_{+-}(p, q))$ $=$ $(\rho_{2} \circ \delta_{s, t})(S_{+-}(p, q))$ $=$ $\rho_{2}(\mathcal{S}^{u}_{-+})$ and $\delta_{s, t} \circ \rho_{2}(S_{-+}(p, q))$ $=$ $(\rho_{2} \circ \delta_{s, t})(S_{-+}(p, q))$ $=$ $\rho_{2}(\mathcal{S}^{u}_{-+})$ follows from (\ref{f2})--(\ref{f4}).  

Second, 
\begin{equation}\label{2nd-ret}
\begin{split}
(\rho_{2} \circ \delta_{s, t})(S_{++}(p, q) \otimes [x])
&= \rho_{2}\left(S_{-+}(p:q, q:p) \otimes [xa] + \qquad\begin{minipage}{30pt}
        \begin{picture}(30,37)
            \qbezier(0,5)(40,20)(0,35)
            \qbezier(11,26)(1,20)(11,14)
            \qbezier(18,29)(24,32)(30,35)
            \qbezier(18,10)(24,7.5)(30,5)
            {\color{blue}
            {\put(8,27.5){\circle*{3}}
\put(20,27.5){\circle*{3}}
\put(9,27.5){\line(1,0){10}}
            }}
            {\color{red}{\put(16,12){\circle*{3}}
\put(5,12){\circle*{3}}
\put(5,12){\line(1,0){10}}}}
\put(-25,17){$q:q$}
\qbezier(-3,21)(2,24)(9,19)
\put(7,35){$p$}
\put(0,0){$q:q$}
        \end{picture}
    \end{minipage} \!\! \otimes [xb]\right)\\
&= - \rho_{2}(S_{+-}(p, q) \otimes [xb]) + \rho_{2}(S_{+-}(p, q) \otimes [xb])\\
&= 0.  
\end{split}
\end{equation}

The second equality of (\ref{2nd-ret}) follows from 
\begin{equation}\label{1to1eq}
f_{+-}(\delta_{s,t}(S_{++}(p,q) \otimes [x])) = S_{+-}(p,q) \otimes [xb]
\end{equation}
where $f_{+-}$ is the homomorphism defined by $f_{+-}(S)$ $=$ $S$ if $S$ $=$ $S_{+-}(p, q) \otimes [xb]$ of any $p$, $q$, $f_{+-}(S)$ $=$ $0$ otherwise.  
We have (\ref{1to1eq}) because $(\ref{f4})$ and (\ref{f5}) implies that $\delta_{s,t}(S_{++}(p,q) \otimes [x])$ $=$ $S_{+-}(p,q) \otimes [xb]$ $+$ $\sum_{u} S_{u}$ where $S_{u}$ is neither $S_{+-}(p,q) \otimes [xb]$ of any $p$ and $q$ nor $S_{--}(p, q) \otimes [xab]$ of any $p$ and $q$.  

Then $\rho_{2}$ is certainly a chain map.  

$\bullet$ On the proof of the $\delta_{s, t}$ $\circ$ $h_{2}$ $+$  $h_{2}$ $\circ$ $\delta_{s, t}$ $=$ $\operatorname{id} - \operatorname{in} \circ \rho_{2}$.  

In the beginning, let us show the equation depending only on (\ref{f2})--(\ref{f4}) of Figure \ref{frobenius}.  

\begin{equation}
\begin{split}
\left(h_{2} \circ \delta_{s, t} + \delta_{s, t} \circ h_{2} \right) (S_{--}(p, q) \otimes [xab]) &= S_{--}(p,q) \otimes [xab]\\
&=(\operatorname{id} - \rho_{2})(S_{--}(p, q) \otimes [xab]), 
\end{split}
\end{equation}
\begin{equation}
\begin{split}
\left(h_{2} \circ \delta_{s, t} + \delta_{s, t} \circ h_{2} \right) (S_{-+}(p, q) \otimes [xa]) &=h_{2}(S_{--}(p:q, q:p) \otimes [xab])\\
&=- S_{+-,-}(p:q, q:p) \otimes [xb]\\
&=(\operatorname{id} - \rho_{2})(S_{-+}(p, q) \otimes [xa]), 
\end{split}
\end{equation}
\begin{equation}
\begin{split}
\left(h_{2} \circ \delta_{s, t} + \delta_{s, t} \circ h_{2} \right) (S_{+-,-}(p, q) \otimes [xb]) &=h_{2}(-S_{--}(p,q) \otimes [xab])\\
&= S_{+-,-}(p,q) \otimes [xb]\\
&=(\operatorname{id} - \rho_{2})(S_{+-,-}(p, q) \otimes [xb]).  
\end{split}
\end{equation}

Second, let us show the equation depending only on (\ref{f5})--(\ref{f6}) of Figure \ref{frobenius}.  

\begin{equation}\label{2nd_c1}
\begin{split}
\left(h_{2} \circ \delta_{s, t} + \delta_{s, t} \circ h_{2} \right) (S_{++}(p, q) \otimes [x]) &=h_{2}\left(\qquad\begin{minipage}{30pt}
        \begin{picture}(30,37)
            \qbezier(0,5)(40,20)(0,35)
            \qbezier(11,26)(1,20)(11,14)
            \qbezier(18,29)(24,32)(30,35)
            \qbezier(18,10)(24,7.5)(30,5)
            {\color{blue}
            {\put(8,27.5){\circle*{3}}
\put(20,27.5){\circle*{3}}
\put(9,27.5){\line(1,0){10}}
            }}
            {\color{red}{\put(16,12){\circle*{3}}
\put(5,12){\circle*{3}}
\put(5,12){\line(1,0){10}}}}
\put(-25,17){$q:q$}
\qbezier(-3,21)(2,24)(9,19)
\put(7,35){$p$}
\put(0,0){$q:q$}
        \end{picture}
    \end{minipage} \!\! \otimes [xb] \right)\\
&= S_{++}(p,q) \otimes [x]\\
&=(\operatorname{id} - \rho_{2})(S_{++}(p, q) \otimes [x]).  
\end{split}
\end{equation}

The first equality of (\ref{2nd_c1}) follows from Lemma \ref{1to1} 
\begin{lem}\label{1to1}
$h_{2}\left(\qquad\begin{minipage}{30pt}
        \begin{picture}(30,37)
            \qbezier(0,5)(40,20)(0,35)
            \qbezier(11,26)(1,20)(11,14)
            \qbezier(18,29)(24,32)(30,35)
            \qbezier(18,10)(24,7.5)(30,5)
            {\color{blue}
            {\put(8,27.5){\circle*{3}}
\put(20,27.5){\circle*{3}}
\put(9,27.5){\line(1,0){10}}
            }}
            {\color{red}{\put(16,12){\circle*{3}}
\put(5,12){\circle*{3}}
\put(5,12){\line(1,0){10}}}}
\put(-25,17){$q:q$}
\qbezier(-3,21)(2,24)(9,19)
\put(7,35){$p$}
\put(0,0){$q:q$}
        \end{picture}
    \end{minipage} \!\! \otimes [xb]\right)$ $=$ $h_{2}(S_{+-}(p, q) \otimes [xb])$.  
\end{lem}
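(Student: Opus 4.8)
The plan is to reduce the left-hand state to $S_{+-}(p,q)\otimes[xb]$ modulo terms that $h_2$ annihilates, and then invoke linearity.

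First I would recall that, by the convention fixed in the introduction, $p{:}q=\operatorname{m}(p,q)$ is the merge output governed by (\ref{f1})--(\ref{f4}), and rewrite the two $q{:}q$ decorations of the left-hand state accordingly. This presents that state as a $\mathbb{Z}[s,t]$-linear combination of honest basis states, all of the $+-$ marker type, whose leading term is $S_{+-}(p,q)\otimes[xb]$ and whose remaining summands differ from it only in the sign carried by the circle created at the crossing $b$; the summands carrying an extra $-$ there are exactly of $S_{+-,-}$ type.

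Next I would apply $h_2$ term by term, using the explicit formula of \cite[Section 2.1, Equation (7)]{ito} read with the generalized signs of Figure \ref{frobenius}. The key structural fact is that $h_2$ has small support: among states of $+-$ marker type it acts nontrivially only on the configuration realized by $S_{+-}(p,q)\otimes[xb]$, whereas it vanishes on the $S_{+-,-}$ type states. This latter vanishing follows from the definition of $h_2$ in \cite[Section 2.1, Equation (7)]{ito} and is consistent with the displayed computation immediately preceding this lemma, in which $\delta_{s,t}\circ h_2$ contributes nothing to $(h_2\circ\delta_{s,t}+\delta_{s,t}\circ h_2)(S_{+-,-}(p,q)\otimes[xb])$. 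Hence every summand of the expansion except $S_{+-}(p,q)\otimes[xb]$ is killed.

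By linearity this yields $h_2$ of the left-hand state equal to $h_2(S_{+-}(p,q)\otimes[xb])$, which is the assertion; together with the defining value $h_2(S_{+-}(p,q)\otimes[xb])=S_{++}(p,q)\otimes[x]$ it also supplies the first equality of (\ref{2nd_c1}). The step I expect to be the main obstacle is the coefficient bookkeeping in the expansion: when $q=+$ the rule (\ref{f1}) introduces $s$ and $t$, and one must verify that, once the $S_{+-,-}$ contributions have been annihilated, the surviving $S_{+-}(p,q)\otimes[xb]$ term occurs with coefficient exactly $1$, so that no spurious scalar factor is introduced by passing through the generalized Frobenius calculus.
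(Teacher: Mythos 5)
Your proposal is correct and takes essentially the same route as the paper's proof: the paper likewise combines the explicit formula for $h_{2}$ from \cite{ito} (nonzero only on $S_{+-}$- and $S_{--}$-type states, hence zero on every $S_{+-,-}$-type summand) with (\ref{1to1eq}), which is exactly your claim that the expansion of the $q{:}q$-labelled state contains $S_{+-}(p,q)\otimes[xb]$ with coefficient precisely $1$ and otherwise only $S_{+-,-}$-type terms. One minor slip worth fixing: the expansion at the crossing $b$ is governed by the comultiplication rules (\ref{f5}) (for $q=+$, introducing $t$) and (\ref{f6}) (for $q=-$, introducing $-s$), not by the multiplication rule (\ref{f1}), though this does not affect the argument.
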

\begin{proof}
$h_{2} : $ $S_{+-}(p, q) \otimes [xb]$ $\mapsto$ $S_{++}(p, q) \otimes [x]$, $S_{--}(p, q) \otimes [xab]$ $\mapsto$ $- S_{+-,-}(p, q) \otimes [xb]$ and otherwise $\mapsto 0$ \cite{ito}.  On the other hand, we have (\ref{1to1eq}).  
\end{proof}

Third, let us show the equation depending on all the relation in Figure \ref{frobenius}.  
We use the homomorphism $f_{+-,-}$ defined by $f_{+-,-}(S)$ $=$ $S$ if $S$ $=$ $S_{+-,-}(p, q) \otimes [xb]$ of any $p$ and $q$, $f_{+-,-}(S)$ $=$ $0$ otherwise.  

\begin{equation}\label{2nd_c2}
\begin{split}
\left(h_{2} \circ \delta_{s, t} + \delta_{s, t} \circ h_{2} \right)(S_{+-}(p, q) \otimes [xb]) &= S_{+-,-}(\operatorname{m}(p:+), q) \otimes [xb]\\ &\quad + S_{+-}(p,q) \otimes [xb] \quad (\because (\ref{1to1eq})) \\ &\quad+ f_{+-,-}(\delta_{s, t}(S_{++}(p, q) \otimes [x]))\\ &\quad+ S_{-+}(p:q, q:p) \otimes [xa]\\
&= S_{+-}(p,q) \otimes [xb] \\ &\quad+ S_{+-,-}((p:q):(q:p), (q:p):(p:q)) \\
&\qquad \otimes [xb]\\ &\quad+ S_{-+}(p:q, q:p) \otimes [xa]\\
&=(\operatorname{id} - \rho_{2})(S_{+-}(p, q) \otimes [xb]).  
\end{split}
\end{equation}

The third equality of (\ref{2nd_c2}) follows from Lemma \ref{+--lem}.  
\begin{lem}\label{+--lem}
\begin{equation}\label{+--eq}
\begin{split}
S_{+-,-}(\operatorname{m}(p:+), q) \otimes [xb] + f_{+-,-}&(\delta_{s, t}(S_{++}(p, q) \otimes [x]))\\
&= S_{+-,-}((p:q):(q:p), (q:p):(p:q)) \otimes [xb]. 
\end{split} 
\end{equation}  
\end{lem}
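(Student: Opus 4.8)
The plan is to reduce the identity (\ref{+--eq}) to an equality in the coefficient algebra. Both sides are $\mathbb{Z}[s,t]$-linear combinations of states of the single type $S_{+-,-}(\cdot,\cdot)\otimes[xb]$, and $S_{+-,-}(\alpha,\beta)$ is multilinear in its two circle-signs $\alpha,\beta$, which we regard as elements of the Frobenius algebra $A:=\mathbb{Z}[s,t]\langle +,-\rangle$ with colon-product read off from rows (\ref{f1})--(\ref{f4}); thus $+:+=s(+)+t(-)$, $+:-=-:+=(+)$ and $-:-=(-)$, i.e.\ $+\leftrightarrow X$, $-\leftrightarrow 1$ under $A\cong\mathbb{Z}[s,t][X]/(X^{2}-sX-t)$. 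Since $S_{+-,-}$ and $S_{+-}$ denote the \emph{same} resolved diagram at homological degree $[xb]$ and differ only in the sign carried by the central bigon-circle, the operators $f_{+-}$ and $f_{+-,-}$ merely record the $+$- and $-$-part of that circle. It therefore suffices to compute the second summand $f_{+-,-}(\delta_{s,t}(S_{++}(p,q)\otimes[x]))$ and to match, coordinate by coordinate in $A$, the total left-hand side against the right-hand pair $\big((p:q):(q:p),(q:p):(p:q)\big)$.

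First I would compute $f_{+-,-}(\delta_{s,t}(S_{++}(p,q)\otimes[x]))$. The only contributions to homological degree $[xb]$ come from switching the marker at the crossing $b$; with $a$ already smoothed as in $S_{++}$, this saddle pinches off the central circle and is therefore governed by the comultiplication rows (\ref{f5})--(\ref{f6}) applied along the side carrying the $q$-region, while the colon-rule of Figure \ref{frobenius} (row (\ref{f7})) assigns to the two freed regions the signs $p:q$ and $q:p$. Because the coproduct is a sum of several terms, $f_{+-,-}$ of this saddle is itself a sum of $S_{+-,-}$-states: retaining only those whose central circle is $-$, and then resolving the adjacent merges via (\ref{f1})--(\ref{f4}), yields an explicit $A$-combination whose two coordinates I would record. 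The companion identity (\ref{1to1eq}) for $f_{+-}$ serves as a consistency check here, since $f_{+-}$ and $f_{+-,-}$ together must reproduce the full $[xb]$-part of $\delta_{s,t}(S_{++}(p,q)\otimes[x])$ displayed in (\ref{2nd-ret}).

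Next I would add the first summand $S_{+-,-}(\operatorname{m}(p:+),q)$, whose coordinates are $(p:+,q)$, and verify that the grand total collapses to the single state $S_{+-,-}\big((p:q):(q:p),(q:p):(p:q)\big)$. Using the multilinearity of $S_{+-,-}$ in its two arguments, this becomes a finite check: since $p,q\in\{+,-\}$, I run through the four cases, expanding every nested colon-product through the single relation $X^{2}=sX+t$ and comparing both coordinates. The mechanism is that the several coproduct terms of the $b$-saddle, together with the correction $\operatorname{m}(p:+)=p:+$ supplied by the first summand, telescope so that the inner products $p:q$ and $q:p$ are completed into their nested forms $(p:q):(q:p)$ and $(q:p):(p:q)$.

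The main obstacle is the middle step: reading off correctly, from the bigon geometry, how the comultiplication (\ref{f5})--(\ref{f6}) at $b$ distributes the $s,t$-weights between the $+$- and $-$-parts of the central circle, and how the adjacent regions acquire the outer colon-products. Once the $b$-saddle is correctly identified as the split of Figure \ref{frobenius} and its $f_{+-,-}$-part is expanded, the remaining matching is forced by $X^{2}=sX+t$, and (\ref{+--eq}) follows.
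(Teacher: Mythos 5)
Your reduction to the Frobenius algebra $A=\mathbb{Z}[s,t][X]/(X^{2}-sX-t)$, $+\leftrightarrow X$, $-\leftrightarrow 1$, and the plan of computing the $[xb]$-part of $\delta_{s,t}(S_{++}(p,q)\otimes[x])$ as a coproduct at $b$ followed by a finite case check, is the right sort of argument --- the paper itself proves the lemma by direct case verification. But as written your proposal has two genuine gaps. The first is your reading of the colon. You declare the colon-product to be the multiplication of rows (\ref{f1})--(\ref{f4}) throughout, but in this paper the colon of row (\ref{f7}) is a surgery label whose meaning depends on connectivity: it is the multiplication when the two arcs lie on distinct circles, and the comultiplication of rows (\ref{f5})--(\ref{f6}) when they lie on the same circle. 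In the right-hand side $S_{+-,-}((p:q):(q:p),(q:p):(p:q))$ exactly one level of the nesting is a coproduct: if the $p$- and $q$-circles are distinct, the inner colons are $\operatorname{m}(p,q)$, after which the two labelled arcs lie on one merged circle, so the outer pair of labels jointly denotes $\Delta(\operatorname{m}(p,q))$ --- not the fourfold product $p^{2}q^{2}$. Under your all-product reading the lemma is simply false: for distinct circles and $p=q=+$ the left side is $s\,S_{+-,-}(+,+)+t\,S_{+-,-}(-,+)+t\,S_{+-,-}(+,-)$, whereas $(+:+):(+:+)$ read multiplicatively is $(sX+t)^{2}=(s^{3}+2st)X+(s^{2}t+t^{2})$; no application of $X^{2}=sX+t$ reconciles these, so your final ``matching'' step cannot close. (A smaller error of the same kind: at the $b$-saddle the two freed circles receive the labels $q:q$ and $q:q$ --- the split of the single $q$-circle, as in the picture in (\ref{2nd-ret}) --- not $p:q$ and $q:p$, which arise from the $a$-saddle.)

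The second gap is that your multilinearity framework, and hence your four-case check over $p,q\in\{+,-\}$, tacitly assumes that the circle carrying $p$ and the circle carrying $q$ are distinct components of the state. The paper's proof runs over six cases precisely because this can fail: when the two arcs belong to one component (forcing $p=q$; two further cases), $S_{+-,-}(\alpha,\beta)$ has a single outer circle, so it is not bilinear in two independent slots; the differential at $b$ then splits that one circle, the second Sweedler leg landing on the circle that also carries the top label; and the right-hand side becomes $\operatorname{m}(\Delta(p))$ rather than $\Delta(\operatorname{m}(p,q))$. For instance, with $p=q=+$ on one component both sides equal $s\,S_{+-,-}(+,+)+2t\,S_{+-,-}(-,-)$, which your scheme never produces. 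Both gaps are repairable --- in the distinct-component case the lemma is the Sweedler identity $pX\otimes q+\sum_{q_{(1)}=1}p\otimes q_{(2)}=\Delta(pq)$, using $\Delta(pq)=p\,\Delta(q)$, and the one-component case is its analogue for $\operatorname{m}\circ\Delta$ --- but the proposal as it stands does not prove the lemma.
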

\begin{proof}
Let us consider separately the cases where the component with $p$ is that of $q$ or not.  

Consider the cases where the component with $p$ is that of $q$ (two cases).  
\begin{equation}
\begin{split}
\text{LHS} &= S_{+-,-}(\operatorname{m}(-:+), -) \otimes [xb] + f_{+-,-}(\delta_{s, t}(S_{++}(-, -) \otimes [x]))\\
    &= 2 S_{+-, -}(+, +) \otimes [xb] - s S_{+-, -}(-, -) \otimes [xb]\\
    &= \text{RHS}, \\
\text{LHS} &= S_{+-,-}(\operatorname{m}(+:+), +) \otimes [xb] + f_{+-,-}(\delta_{s, t}(S_{++}(+, +) \otimes [x]))\\
    &= s S_{+-, -}(+, +) \otimes [xb] + 2 t S_{+-, -}(-, -) \otimes [xb]\\
    &= \text{RHS}.  
\end{split}
\end{equation}

Consider the cases where the component with $p$ is not that of $q$ (four cases).  

\begin{equation}
\begin{split}
\text{LHS} &= S_{+-,-}(\operatorname{m}(-:+), -) \otimes [xb] + f_{+-,-}(\delta_{s, t}(S_{++}(-, -) \otimes [x]))\\
    &= S_{+-, -}(+, -) \otimes [xb] + S_{+-, -}(-, +) \otimes [xb] - s S_{+-, -}(-, -) \otimes [xb]\\
    &= \text{RHS}, \\
\text{LHS} &= S_{+-,-}(\operatorname{m}(+:+), +) \otimes [xb] + f_{+-,-}(\delta_{s, t}(S_{++}(+, +) \otimes [x]))\\
    &= s S_{+-, -}(+, +) \otimes [xb] + t S_{+-, -}(-, +) \otimes [xb] + t S_{+-, -}(+, -) \otimes [xb]\\
    &= \text{RHS}, \\
\text{LHS} &= S_{+-,-}(\operatorname{m}(+:+), -) \otimes [xb] + f_{+-,-}(\delta_{s, t}(S_{++}(+, -) \otimes [x]))\\
    &= t S_{+-, -}(-, -) \otimes [xb] + S_{+-, -}(+, +) \otimes [xb]\\
    &= \text{RHS}, \\
\text{LHS} &= S_{+-,-}(\operatorname{m}(-:+), +) \otimes [xb] + f_{+-,-}(\delta_{s, t}(S_{++}(-, +) \otimes [x]))\\
    &= S_{+-, -}(+, +) \otimes [xb] + t S_{+-, -}(-, -) \otimes [xb].  \\
    &= \text{RHS}.  
\end{split}
\end{equation}
\end{proof}

\subsection{On the third Reidemeister invariance.  }\label{3rd}
In this section,  consider the third Reidemeister move of Khovanov homology for $\delta_{s,t}$.  
In \cite[Subsection 2.2]{ito}, a chain homotopy map, a retraction and an isomorphism for the third Reidemeister invariance of Khovanov homology for $\delta_{0, 0}$ were given.  

We extend the chain homotopy map, the retraction and the isomorphism for $\delta_{0, 0}$ to maps $h_{3}$, $\rho_{3}$ and $\operatorname{isom}_{3}$ for $\delta_{s, t}$ by replacing $p:q$ and $q:p$ defined \cite[Figure 3]{jacobsson} with the ``generalised" $p:q$ and $q:p$ defined Figure \ref{frobenius}.  We will show that these extended maps become the chain homotopy map and the retraction and the isomorphism for the third Reidemeister invariance of Khovanov homology for $\delta_{s, t}$.  

Let $a$, $b$ and $c$ be crossings, $x$ sequence of crossings with negative markers and $p$, $q$, $r$ be signs.  For a crossing with no markers or no signs in the following formulas, any markers or signs are allowed.  Let $\tilde{r}$ be $r$ unless the upper left arc is connected to one of the other arcs in the picture and let $\tilde{q}$ be $q$ unless the lower left arc is connected to one of the other arcs in the picture.  

For a link diagram $D'$ $=$ $\begin{minipage}{60pt}
        \begin{picture}(50,70)
            \cbezier(0,0)(0,3)(7,7)(12,12)
            \qbezier(19,19)(25,25)(31,31)
            \cbezier(38,38)(45,45)(50,50)(50,56)
            \cbezier(0,56)(0,50)(17.25,44.25)(34.5,34.5)
            \cbezier(34.5,34.5)(44,30)(50,20)(50,0)
            \cbezier(17,48)(21,53)(24,54)(24,56)
            \cbezier(12,41)(-1,25)(24,10)(24,0)
            \put(4,40){$c$}
            \put(41,31){$b$}
            \put(3,12){$a$}
        \end{picture}
    \end{minipage}$, let $S_{**-}$ be $\begin{minipage}{60pt}
        \begin{picture}(50,70)
            \cbezier(0,0)(0,3)(7,7)(12,12)
            \qbezier(19,19)(25,25)(31,31)
            \cbezier(38,38)(45,45)(50,50)(50,56)
            \cbezier(0,56)(0,50)(17.25,44.25)(34.5,34.5)
            \cbezier(34.5,34.5)(44,30)(50,20)(50,0)
            \cbezier(17,48)(21,53)(24,54)(24,56)
            \cbezier(12,41)(-1,25)(24,10)(24,0)
          {\color{red}{\put(14,49){\circle*{3}}
\put(14,39){\circle*{3}}
\put(14,39){\line(0,1){10}}}}
        \end{picture}
    \end{minipage}$, $S_{--+}(p, q, r)$ be $\begin{minipage}{60pt}
        \begin{picture}(50,70)
            \cbezier(0,0)(0,3)(7,7)(12,12)
            \qbezier(19,19)(25,25)(31,31)
            \cbezier(38,38)(45,45)(50,50)(50,56)
            \cbezier(0,56)(0,50)(17.25,44.25)(34.5,34.5)
            \cbezier(34.5,34.5)(44,30)(50,20)(50,0)
            \cbezier(17,48)(21,53)(24,54)(24,56)
            \cbezier(12,41)(-1,25)(24,10)(24,0)
            {\color{red}{\put(16,21){\circle*{3}}
\put(16,10){\circle*{3}}
\put(16,11){\line(0,1){10}}}}
{\color{blue}
            {\put(4.5,44.5){\circle*{3}}
\put(16.5,44.5){\circle*{3}}
\put(5.5,44.5){\line(1,0){10}}
            }}
            {\color{red}{\put(27,39){\circle*{3}}
\put(27,29){\circle*{3}}
\put(27,29){\line(0,1){10}}}}
            \put(4,0){$q$}
            \put(35,33){$p$}
            \put(5,50){$r$}
        \end{picture}
    \end{minipage}$, $S_{-++}(p, q, r)$ be\ \ 
    $\begin{minipage}{60pt}\begin{picture}(50,70)
            \cbezier(0,0)(0,3)(7,7)(12,12)
            \qbezier(19,19)(25,25)(31,31)
            \cbezier(38,38)(45,45)(50,50)(50,56)
            \cbezier(0,56)(0,50)(17.25,44.25)(34.5,34.5)
            \cbezier(34.5,34.5)(44,30)(50,20)(50,0)
            \cbezier(17,48)(21,53)(24,54)(24,56)
            \cbezier(12,41)(-1,25)(24,10)(24,0)
            {\color{red}{\put(16,21){\circle*{3}}
\put(16,10){\circle*{3}}
\put(16,11){\line(0,1){10}}}}
{\color{blue}
            {\put(4.5,44.5){\circle*{3}}
\put(16.5,44.5){\circle*{3}}
\put(5.5,44.5){\line(1,0){10}}
            }}
            {\color{blue}
            {\put(32.5,34.5){\circle*{3}}
\put(20.5,34.5){\circle*{3}}
\put(21.5,34.5){\line(1,0){10}}
            }}
            \put(13,14){$q$}
            \put(45,45){$p$}
            \put(4,48){$r$}
        \end{picture}
    \end{minipage}$, 
    $S_{+-+}(p, q, r)$ be $\begin{minipage}{60pt}
        \begin{picture}(50,70)
            \cbezier(0,0)(0,3)(7,7)(12,12)
            \qbezier(19,19)(25,25)(31,31)
            \cbezier(38,38)(45,45)(50,50)(50,56)
            \cbezier(0,56)(0,50)(17.25,44.25)(34.5,34.5)
            \cbezier(34.5,34.5)(44,30)(50,20)(50,0)
            \cbezier(17,48)(21,53)(24,54)(24,56)
            \cbezier(12,41)(-1,25)(24,10)(24,0)
            \put(15,28){\text{$+$}}
            {\color{blue}
            {\put(8,44.5){\circle*{3}}
\put(20,44.5){\circle*{3}}
\put(9,44.5){\line(1,0){10}}
            }}
            {\color{blue}{\put(8,15){\circle*{3}}
\put(18,15){\circle*{3}}
\put(9,15){\line(1,0){10}}}}
{\color{red}{\put(27,39){\circle*{3}}
\put(27,29){\circle*{3}}
\put(27,29){\line(0,1){10}}}}
\put(4,0){$q$}
\put(35,33){$p$}
\put(5,50){$r$}
        \end{picture}
    \end{minipage}$, 
    $S_{+-+,-}(p, q, r)$ be $\begin{minipage}{60pt}
        \begin{picture}(50,70)
            \cbezier(0,0)(0,3)(7,7)(12,12)
            \qbezier(19,19)(25,25)(31,31)
            \cbezier(38,38)(45,45)(50,50)(50,56)
            \cbezier(0,56)(0,50)(17.25,44.25)(34.5,34.5)
            \cbezier(34.5,34.5)(44,30)(50,20)(50,0)
            \cbezier(17,48)(21,53)(24,54)(24,56)
            \cbezier(12,41)(-1,25)(24,10)(24,0)
             \put(15,28){\text{$-$}}
            {\color{blue}
            {\put(8,44.5){\circle*{3}}
\put(20,44.5){\circle*{3}}
\put(9,44.5){\line(1,0){10}}
            }}
            {\color{blue}{\put(8,15){\circle*{3}}
\put(18,15){\circle*{3}}
\put(9,15){\line(1,0){10}}}}
{\color{red}{\put(27,39){\circle*{3}}
\put(27,29){\circle*{3}}
\put(27,29){\line(0,1){10}}}}
\put(4,0){$q$}
\put(35,33){$p$}
\put(5,50){$r$}
        \end{picture}
    \end{minipage}$, 
    $S_{+++}(p, q, r)$ be $\begin{minipage}{60pt}
            \begin{picture}(50,70)
            \cbezier(0,0)(0,3)(7,7)(12,12)
            \qbezier(19,19)(25,25)(31,31)
            \cbezier(38,38)(45,45)(50,50)(50,56)
            \cbezier(0,56)(0,50)(17.25,44.25)(34.5,34.5)
            \cbezier(34.5,34.5)(44,30)(50,20)(50,0)
            \cbezier(17,48)(21,53)(24,54)(24,56)
            \cbezier(12,41)(-1,25)(24,10)(24,0)
{\color{blue}
            {\put(8,44.5){\circle*{3}}
\put(20,44.5){\circle*{3}}
\put(9,44.5){\line(1,0){10}}
            }}
            {\color{blue}
            {\put(37.5,34.5){\circle*{3}}
\put(25.5,34.5){\circle*{3}}
\put(26.5,34.5){\line(1,0){10}}
            }}
            {\color{blue}{\put(4,15){\circle*{3}}
\put(14,15){\circle*{3}}
\put(5,15){\line(1,0){10}}}}
\put(4,0){$q$}
\put(38,33){$p$}
\put(5,50){$r$}
        \end{picture}
    \end{minipage}$, 
    $S_{++-}(p, q, r)$ be $
\begin{minipage}{60pt}
        \begin{picture}(50,70)
            \cbezier(0,0)(0,3)(7,7)(12,12)
            \qbezier(19,19)(25,25)(31,31)
            \cbezier(38,38)(45,45)(50,50)(50,56)
            \cbezier(0,56)(0,50)(17.25,44.25)(34.5,34.5)
            \cbezier(34.5,34.5)(44,30)(50,20)(50,0)
            \cbezier(17,48)(21,53)(24,54)(24,56)
            \cbezier(12,41)(-1,25)(24,10)(24,0)
            {\color{red}
            {\put(15,50){\circle*{3}}
\put(15,40){\circle*{3}}
\put(15,40){\line(0,1){10}}
            }}
            {\color{blue}{\put(8,15){\circle*{3}}
\put(18,15){\circle*{3}}
\put(9,15){\line(1,0){10}}}}
{\color{blue}{\put(32,34){\circle*{3}}
\put(22,34){\circle*{3}}
\put(22,34){\line(1,0){10}}}}
\put(4,3){$q$}
\put(32,50){$p$}
\put(-2,40){$r$}
        \end{picture}
    \end{minipage}
$, $S_{-+-}(p, q, r)$ be $\begin{minipage}{60pt}
        \begin{picture}(50,70)
            \cbezier(0,0)(0,3)(7,7)(12,12)
            \qbezier(19,19)(25,25)(31,31)
            \cbezier(38,38)(45,45)(50,50)(50,56)
            \cbezier(0,56)(0,50)(17.25,44.25)(34.5,34.5)
            \cbezier(34.5,34.5)(44,30)(50,20)(50,0)
            \cbezier(17,48)(21,53)(24,54)(24,56)
            \cbezier(12,41)(-1,25)(24,10)(24,0)
{\color{red}{\put(16,21){\circle*{3}}
\put(16,10){\circle*{3}}
\put(16,11){\line(0,1){10}}}}
{\color{red}
            {\put(10,50){\circle*{3}}
\put(10,40){\circle*{3}}
\put(10,40){\line(0,1){10}}
            }}
            {\color{blue}
            {\put(32.5,34.5){\circle*{3}}
\put(20.5,34.5){\circle*{3}}
\put(21.5,34.5){\line(1,0){10}}
            }}
            \put(13,14){$q$}
            \put(32,50){$p$}
            \put(-2,40){$r$}
        \end{picture}
    \end{minipage}$, $S_{+--}(p, q, r)$ be $
\begin{minipage}{60pt}
        \begin{picture}(50,70)
            \cbezier(0,0)(0,3)(7,7)(12,12)
            \qbezier(19,19)(25,25)(31,31)
            \cbezier(38,38)(45,45)(50,50)(50,56)
            \cbezier(0,56)(0,50)(17.25,44.25)(34.5,34.5)
            \cbezier(34.5,34.5)(44,30)(50,20)(50,0)
            \cbezier(17,48)(21,53)(24,54)(24,56)
            \cbezier(12,41)(-1,25)(24,10)(24,0)
            {\color{red}
            {\put(15,50){\circle*{3}}
\put(15,40){\circle*{3}}
\put(15,40){\line(0,1){10}}
            }}
            {\color{blue}{\put(8,15){\circle*{3}}
\put(18,15){\circle*{3}}
\put(9,15){\line(1,0){10}}}}
{\color{red}{\put(27,39){\circle*{3}}
\put(27,29){\circle*{3}}
\put(27,29){\line(0,1){10}}}}
\put(4,3){$q$}
\put(35,34){$p$}
\put(0,50){$r$}
        \end{picture}
    \end{minipage}
$, $S_{---}(p, q, r)$ be $
\begin{minipage}{60pt}
        \begin{picture}(50,70)
            \cbezier(0,0)(0,3)(7,7)(12,12)
            \qbezier(19,19)(25,25)(31,31)
            \cbezier(38,38)(45,45)(50,50)(50,56)
            \cbezier(0,56)(0,50)(17.25,44.25)(34.5,34.5)
            \cbezier(34.5,34.5)(44,30)(50,20)(50,0)
            \cbezier(17,48)(21,53)(24,54)(24,56)
            \cbezier(12,41)(-1,25)(24,10)(24,0)
{\color{red}{\put(16,21){\circle*{3}}
\put(16,10){\circle*{3}}
\put(16,11){\line(0,1){10}}}}
{\color{red}{\put(11,50){\circle*{3}}
\put(11,40){\circle*{3}}
\put(11,40){\line(0,1){10}}}}
{\color{red}{\put(27,39){\circle*{3}}
\put(27,29){\circle*{3}}
\put(27,29){\line(0,1){10}}}}
\put(13,14){$q$}
\put(35,34){$p$}
\put(-2,40){$r$}            
\end{picture}
\end{minipage}
$.  
 
The third Reidemeister move is $D'$ $\stackrel{3}{\sim}$ $ \begin{minipage}{60pt}
        \begin{picture}(50,70)
        \cbezier(37.5,42.5)(43,47)(48,52)(50,56)
            \qbezier(31,36)(25,30)(19,24)
            \cbezier(12,17)(5,10)(0,5)(0,0)
            \cbezier(50,0)(50,5)(32.75,10.75)(15.5,20.5)
            \cbezier(15.5,20.5)(6,25)(0,50)(0,56)
            \cbezier(31.5,8.5)(30,7)(24,4)(24,0)
            \cbezier(37,13)(61,30)(24,40)(24,56)
            \put(39,9.5){$c$}
            \put(5,17){$a$}
            \put(24,35){$b$}
        \end{picture}
    \end{minipage}$ $=$ $D$, we consider the composition 

\begin{equation}
\mathcal{C}(D') = \mathcal{C'} \oplus \mathcal{C'}_{\rm{contr}} \stackrel{\rho_{3}}{\to} \mathcal{C'} \stackrel{{\rm isom_{3}}}{\to} \mathcal{C} \stackrel{\operatorname{in}}{\to} \mathcal{C}(D)
\end{equation} 
where $\mathcal{C'}$, $\mathcal{C'}_{\rm{contr}}$, $\rho_{3}$, $\mathcal{C}$ and the $\operatorname{isom}_{3}$ are defined in the following formulas in \cite[Section 2.2]{ito} by replacing $p:q$ and $q:p$ in \cite[Section 2.1]{ito} with the ``generalised" $p:q$ and $q:p$ defined Figure \ref{frobenius}.  

Let $h_{3}$ be the chain homotopy maps given by \cite[Section 2.2, Equation (12)]{ito}.  Similarly to the previous section, to verify the third Reidemeister invariance of $\mathcal{H}^{i}(D)$, it is sufficient to show that $\delta_{s, t} \circ \rho_{3}$ $=$ $\rho_{3} \circ \delta_{s, t}$ and $\delta_{s, t}$ $\circ$ $h_{3}$ $+$  $h_{3}$ $\circ$ $\delta_{s, t}$ $=$ $\operatorname{id} - \operatorname{in} \circ \rho_{3}$.  

$\bullet$ On the proof of the $\delta_{s, t} \circ \rho_{3}$ $=$ $\rho_{3} \circ \delta_{s, t}$.  

We denote a big sum of the states $\sum_{u} S \otimes [xu]$ by $S^{u}$ where $u$ ranges over crossings of $D'$ except for $a$, $b$, and $c$.  

First, we can verify that we have the following relations: $(\rho_{3} \circ \delta_{s, t})(S_{--+}(p, q, r) \otimes [xab])$ $=$ $\delta_{s, t} \circ \rho_{3}(S_{--+}(p, q, r) \otimes [xab])$ $=$ $S_{---}(p, q:r, r:q) \otimes [xabc]$ $+$ $\mathcal{S}^{u}_{+--}$; $(\rho_{3} \circ \delta_{s, t})(S_{+++}(p, q, r) \otimes [x])$ $=$ $(\delta_{s, t} \circ \rho_{3})(S_{+++}(p, q, r) \otimes [x])$ $=$ $0$; $(\rho_{3} \circ \delta_{s, t})(S_{+-+, -}(p, q, r) \otimes [xb])$ $=$ $(\delta_{s, t} \circ \rho_{3})(S_{+-+, -}(p, q, r) \otimes [xb])$ $=$ $0$; $(\rho_{3} \circ \delta_{s, t})(S_{**-} \otimes [x])$ $=$ $(\delta_{s, t} \circ \rho_{3})(S_{**-} \otimes [x])$ $=$ $\delta_{s, t}(S_{**-} \otimes [x])$.

Second, $(\rho_{3} \circ \delta_{s, t})(S_{-++}(p, q, r) \otimes [xa])$ $=$ $(\delta_{s, t} \circ \rho_{3})(S_{-++}(p, q, r) \otimes [xa])$ $=$ $S_{+--}(p:q, q:p, \tilde{r}) \otimes [xbc]$ $+$ $S_{-+-}(p:r, \tilde{q}, r:p) \otimes [xac]$ $+$ $\rho_{3}(\mathcal{S}^{u}_{-++})$ follows from (\ref{f2})--(\ref{f4}).  

In the following, the final case follows from (\ref{+--eq}).  
Let $f_{*}$ be the homomorphism such that $f_{*}(S)$ $=$ $S$ if $S$ $=$ $S_{*}(p, q, r) \otimes [x]$ of any $p$, $q$, $r$, and $x$, $f_{*}(S)$ $=$ $0$ otherwise.  
\begin{equation}
\begin{split}
(\rho_{3} \circ \delta_{s, t})(S_{+-+}(p, q, r) \otimes [xb])
&= \rho_{3}(S_{+--}(p, q, \operatorname{m}(r, +)) \otimes [xbc]) \\ 
&\quad - \rho_{3}(S_{--+}(p, \operatorname{m}(q, +), r)) + \rho_{3}(\mathcal{S}^{u}_{+-+})\\
&= S_{+--}(p, q, \operatorname{m}(r, +) \otimes [xbc]) \\ &\quad
- S_{+--}(p, \operatorname{m}(q, +), r) \otimes [xbc]) + \rho_{3}(\mathcal{S}^{u}_{+-+}).  
\end{split}
\end{equation}
On the other hand, 
\begin{equation*}
\begin{split}
(\delta_{s, t} \circ \rho_{3})(S_{+-+}(p, q, r) \otimes [xb]) &= \delta_{s, t}(- S_{-++}(q:p, p:q, r) \otimes [xa] \\ & \quad - S_{+-+, -}((p:q):(q:p), (q:p):(p:q), r) \otimes [xb]\\& \quad  - S_{++-}(p:r, q, r:p) \otimes [xc])\\
&= \delta_{s, t}\Bigl(-S_{+-+, -}((p:q):(q:p), (q:p):(p:q), r)\\& \quad \otimes [xb]\\
& \quad - \delta_{s, t} (S_{+++}(p, q, r) \otimes [x]) + 
\qquad \begin{minipage}{60pt}
\begin{picture}(50,70)
\cbezier(0,0)(0,3)(7,7)(12,12)
\qbezier(19,19)(25,25)(31,31)
\cbezier(38,38)(45,45)(50,50)(50,56)
\cbezier(0,56)(0,50)(17.25,44.25)(34.5,34.5)
\cbezier(34.5,34.5)(44,30)(50,20)(50,0)
\cbezier(17,48)(21,53)(24,54)(24,56)
\cbezier(12,41)(-1,25)(24,10)(24,0)
{\color{blue}
{\put(8,44.5){\circle*{3}}
\put(20,44.5){\circle*{3}}
\put(9,44.5){\line(1,0){10}}
}}
{\color{red}{\put(8,15){\circle*{3}}
\put(18,15){\circle*{3}}
\put(9,15){\line(1,0){10}}}}
{\color{blue}{\put(27,39){\circle*{3}}
\put(27,29){\circle*{3}}
\put(27,29){\line(0,1){10}}}}
\put(4,0){$q$}
\put(35,33){$p:p$}
\put(5,50){$r$}
\put(-27,28){$p:p$}
\qbezier(-3,31)(2,34)(9,29)
\end{picture}
\end{minipage}\\
&\quad \otimes [xb]\Bigl) \quad(\because \mathcal{S}^{u}_{+++} = 0 ~{\text{on}}~\mathcal{C}')\\
&= \delta_{s, t}\Big(- S_{+-+, -}(p, \operatorname{m}(+, q), r) \otimes [xb]\\ & \quad - f_{+-+, -}(\delta_{s, t} (S_{+++}(p, q, r) \otimes [x])) \\
\end{split}
\end{equation*}
\begin{equation}
\begin{split}
& \quad + \qquad \begin{minipage}{60pt}
\begin{picture}(50,70)
\cbezier(0,0)(0,3)(7,7)(12,12)
\qbezier(19,19)(25,25)(31,31)
\cbezier(38,38)(45,45)(50,50)(50,56)
\cbezier(0,56)(0,50)(17.25,44.25)(34.5,34.5)
\cbezier(34.5,34.5)(44,30)(50,20)(50,0)
\cbezier(17,48)(21,53)(24,54)(24,56)
\cbezier(12,41)(-1,25)(24,10)(24,0)
{\color{blue}
{\put(8,44.5){\circle*{3}}
\put(20,44.5){\circle*{3}}
\put(9,44.5){\line(1,0){10}}
}}
{\color{blue}{\put(8,15){\circle*{3}}
\put(18,15){\circle*{3}}
\put(9,15){\line(1,0){10}}}}
{\color{red}{\put(27,39){\circle*{3}}
\put(27,29){\circle*{3}}
\put(27,29){\line(0,1){10}}}}
\put(4,0){$q$}
\put(35,33){$p:p$}
\put(5,50){$r$}
\put(-27,28){$p:p$}
\qbezier(-3,31)(2,34)(9,29)
\end{picture}
\end{minipage}\ \ \ \otimes [xb]\Bigl) \quad (\because (\ref{+-+,-eq}))\\
&= \delta_{s, t}(- S_{+-+, -}(p, \operatorname{m}(+, q), r) \otimes [xb]\\
&\quad + S_{+-+}(p, q, r) \otimes [xb])\\
&= - S_{+--}(p, \operatorname{m}(+, q), r) \otimes [xbc]\\&\quad + S_{+--}(p, q, \operatorname{m}(r, +)) \otimes [xbc] + \rho_{3}(\mathcal{S}^{u}_{+-+})
\end{split}
\end{equation}
\[(\because \rho_{3}(S_{+-+}(p, q, r) \otimes [xb])  = S_{+-+}(p, q, r) \otimes [xb] - S_{+-+, -}(p, \operatorname{m}(+, q), r) \otimes [xb]~{\text{on}}~\mathcal{C}')\]
where \begin{equation}\label{+-+,-eq}
\begin{split}
S_{+-+, -}((p:q):(q:p), (q:p):(p:q), r) \otimes [xb] &= S_{+-+, -}(p, \operatorname{m}(+:q), r) \otimes [xb]\\
&\quad + f_{+-+, -}(\delta_{s, t}(S_{+++}(p, q, r) \otimes [x])).  
\end{split}
\end{equation}
(\ref{+-+,-eq}) follows from (\ref{+--eq}).  When we  localise the problem to two signed circles concerning with $p$, $q$ and exchange $p$, $q$, (\ref{+--eq}) implies (\ref{+-+,-eq}).  

$\bullet$ On the proof of the $\delta_{s, t}$ $\circ$ $h_{3}$ $+$  $h_{3}$ $\circ$ $\delta_{s, t}$ $=$ $\operatorname{id} - \operatorname{in} \circ \rho_{3}$.  

In the beginning, let us show the equation not depending on Figure \ref{frobenius}.  

\begin{equation}
\begin{split}
\left(h_{3} \circ \delta_{s, t} + \delta_{s, t} \circ h_{3} \right) (S_{**-} \otimes [x]) &= 0 \\
&=(\operatorname{id} - \rho_{3})(S_{**-} \otimes [x]), \\ 
\left(h_{3} \circ \delta_{s, t} + \delta_{s, t} \circ h_{3} \right) (S_{-++}(p, q, r) \otimes [xa]) &= h_{3}(S_{--+} \otimes [xab])\\
&= - S_{+-+, -}(q:p, p:q, \tilde{r}) \otimes [xb]\\ 
&= (\operatorname{id} - \rho_{3})(S_{-++}(p, q, r) \otimes [xa]).  
\end{split}
\end{equation}

Second, let us show the equation depending only on (\ref{f2})--(\ref{f4}) of Figure \ref{frobenius}.  

\begin{equation}
\begin{split}
\left(h_{3} \circ \delta_{s, t} + \delta_{s, t} \circ h_{3} \right) (S_{--+}(p, q, r) \otimes [xab]) &= S_{--+}(p, q, r) \otimes [xab] - S_{+--}(p, q, r) \\ &\qquad\otimes [xbc]\\ 
&= (\operatorname{id} - \rho_{3})(S_{--+} \otimes [xab]), \\
\left(h_{3} \circ \delta_{s, t} + \delta_{s, t} \circ h_{3} \right) (S_{+-+, -}(p, q, r) \otimes [xb]) &= h(S_{--+}(p, q, r) \otimes [xba])\\
&= S_{+-+, -}(p, q, r) \otimes [xb]\\ 
&= (\operatorname{id} - \rho_{3})(S_{+-+, -}(p, q, r) \otimes [xb]).  
\end{split}
\end{equation}

Third, let us show the equation depending only on (\ref{f5})--(\ref{f6}) of Figure \ref{frobenius}.   

\begin{equation}\label{3rd1to1}
\begin{split}
(h_{3} \circ \delta_{s, t} + \delta_{s, t} \circ h_{3})(S_{+++}(p, q, r) \otimes [x])
&= h_{3}\left(\qquad\begin{minipage}{60pt}
        \begin{picture}(50,70)
            \cbezier(0,0)(0,3)(7,7)(12,12)
            \qbezier(19,19)(25,25)(31,31)
            \cbezier(38,38)(45,45)(50,50)(50,56)
            \cbezier(0,56)(0,50)(17.25,44.25)(34.5,34.5)
            \cbezier(34.5,34.5)(44,30)(50,20)(50,0)
            \cbezier(17,48)(21,53)(24,54)(24,56)
            \cbezier(12,41)(-1,25)(24,10)(24,0)
            {\color{blue}
            {\put(8,44.5){\circle*{3}}
\put(20,44.5){\circle*{3}}
\put(9,44.5){\line(1,0){10}}
            }}
            {\color{blue}{\put(8,15){\circle*{3}}
\put(18,15){\circle*{3}}
\put(9,15){\line(1,0){10}}}}
{\color{red}{\put(27,39){\circle*{3}}
\put(27,29){\circle*{3}}
\put(27,29){\line(0,1){10}}}}
\put(4,0){$q$}
\put(35,33){$p:p$}
\put(5,50){$r$}
\put(-27,28){$p:p$}
\qbezier(-3,31)(2,34)(9,29)
        \end{picture}
    \end{minipage}\ \ \ \otimes [xb]\right)\\
&= S_{+++}(p, q, r) \otimes [x]\\
&= (\operatorname{id} - \rho_{3})(S_{+++}(p, q, r) \otimes [x]).  
\end{split}
\end{equation}
The second equality of (\ref{3rd1to1}) follows from Lemma \ref{1to1} by exchanging $p$, $q$.  

Fourth, let us show the equation depending on all the relation in Figure \ref{frobenius}.  

\begin{equation}\label{+-+eq}
\begin{split}
(h_{3} \circ \delta_{s, t} + \delta_{s, t} \circ h_{3})(S_{+-+}(p, q, r) \otimes [xb]) &= S_{+-+, -}(p, +:q, r) \otimes [xb]\\
& \quad + S_{-++}(q:p, p:q, r) \otimes [xa] \\
& \quad + S_{+-+}(p, q, r) \otimes [xb] \quad (\because (\ref{1to1eq})) \\
& \quad + S_{++-}(p:r, q, r:p) \otimes [xc] \\
& \quad + f_{+-+, -}(\delta_{s, t}(S_{+++} \otimes [x])) \\
&= S_{-++}(q:p, p:q, r) \otimes [xa]\\
& \quad S_{+-+}(p, q, r) \otimes [xb]\\
& \quad S_{++-}(p:r, q, r:p) \otimes [xc]\\
& \quad S_{+-+, -}((p:q):(q:p), (q:p):(p:q), r) \otimes [xb]\\
&= (\operatorname{id} - \rho_{3})(S_{+-+}(p, q, r) \otimes [xb]).  
\end{split}
\end{equation}

The third equality of (\ref{+-+eq}) follows from (\ref{+-+,-eq}).

Section \ref{reide} implies the following.  
\begin{thm}\label{t1}
For the differential $\delta_{s, t}$, $\mathcal{H}^{i}(D')$ $\cong$ $\mathcal{H}^{i}(D)$ for $D'$ $\stackrel{\mathcal{J}}{\sim}$ $D$ is given by the chain homotopy map $h_{\mathcal{J}}$ and the retraction $\rho_{\mathcal{J}}$.  
\end{thm}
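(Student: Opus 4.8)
The plan is to read off Theorem~\ref{t1} from the move-by-move data assembled in Section~\ref{reide} by the standard deformation-retract argument of homological algebra, so that almost all of the computational burden has already been discharged. First I would reduce to a single move: by Reidemeister's theorem any two diagrams $D'$, $D$ of the same link are joined by a finite chain of Reidemeister moves $\mathcal{J}_{1},\dots,\mathcal{J}_{N}$ together with planar isotopies, and $\mathcal{C}(\cdot)$ depends only on the isotopy class of the diagram; hence it suffices to produce, for each elementary move $D'\stackrel{\mathcal{J}}{\sim}D$ with $\mathcal{J}\in\{1,2,3\}$, an isomorphism $\mathcal{H}^{i}(D')\cong\mathcal{H}^{i}(D)$ and then concatenate.

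For a fixed move $\mathcal{J}$, Section~\ref{reide} already supplies the whole package: the decomposition $\mathcal{C}(D')=\mathcal{C}\oplus\mathcal{C}_{\mathrm{contr}}$, the inclusion $\operatorname{in}:\mathcal{C}\to\mathcal{C}(D')$, the retraction $\rho_{\mathcal{J}}$, the contracting homotopy $h_{\mathcal{J}}$, and the chain isomorphism $\operatorname{isom}_{\mathcal{J}}:\mathcal{C}\to\mathcal{C}(D)$. The two facts I would invoke are exactly the identities verified there: $\delta_{s,t}\circ\rho_{\mathcal{J}}=\rho_{\mathcal{J}}\circ\delta_{s,t}$, so that $\operatorname{in}$ and $\rho_{\mathcal{J}}$ are chain maps, and $\delta_{s,t}\circ h_{\mathcal{J}}+h_{\mathcal{J}}\circ\delta_{s,t}=\operatorname{id}-\operatorname{in}\circ\rho_{\mathcal{J}}$. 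Since $\mathcal{C}$ is a direct summand one also has $\rho_{\mathcal{J}}\circ\operatorname{in}=\operatorname{id}_{\mathcal{C}}$ directly from the defining formulas in (\ref{1st-ret}) and their analogues for the second and third moves.

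Granting these, $\operatorname{in}$ and $\rho_{\mathcal{J}}$ are mutually inverse chain homotopy equivalences: $\rho_{\mathcal{J}}\circ\operatorname{in}=\operatorname{id}_{\mathcal{C}}$ on the nose and $\operatorname{in}\circ\rho_{\mathcal{J}}\simeq\operatorname{id}_{\mathcal{C}(D')}$ via $h_{\mathcal{J}}$. They therefore induce mutually inverse isomorphisms $\mathcal{H}^{i}(D')=H^{i}(\mathcal{C}(D'))\cong H^{i}(\mathcal{C})$, and composing with the isomorphism of complexes $\operatorname{isom}_{\mathcal{J}}$ gives $\mathcal{H}^{i}(D')\cong\mathcal{H}^{i}(D)$, explicitly realized by $\operatorname{isom}_{\mathcal{J}}\circ\rho_{\mathcal{J}}$ with homotopy inverse recorded by $h_{\mathcal{J}}$; this is the content of the phrase ``given by the chain homotopy map $h_{\mathcal{J}}$ and the retraction $\rho_{\mathcal{J}}$''. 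Concatenating these isomorphisms along $\mathcal{J}_{1},\dots,\mathcal{J}_{N}$ proves the statement for arbitrary $D'\sim D$. The mirror and rotated variants of each move are handled by the identical argument, using the symmetry of the generalised Frobenius calculus of Figure~\ref{frobenius}.

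I expect the only genuinely nontrivial steps to be the two chain-homotopy verifications underlying the second and third moves, namely Lemmas~\ref{1to1} and \ref{+--lem}: there a merge of two signed circles produces, via $\delta_{s,t}$, a sum of states whose signs have been relabelled by $p:q$ and $q:p$, and one must match this against the contracting homotopy across all sign configurations and over the full ground ring $\mathbb{Z}[s,t]$. This is precisely where the universal parameters $s,t$ enter nontrivially, and where setting $s=t=0$ or $(s,t)=(0,1)$ recovers the Khovanov and Lee computations. By contrast, the homological-algebra passage from the two identities to the homology isomorphism is valid over any commutative ring, since the homotopies are explicit module homomorphisms; no flatness or exactness property of $\mathbb{Z}[s,t]$ is required.
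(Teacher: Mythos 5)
Your proposal is correct and takes essentially the same route as the paper: the paper's entire proof of Theorem~\ref{t1} is the statement that Section~\ref{reide} implies it, i.e.\ exactly the package you invoke (the splitting $\mathcal{C}(D')=\mathcal{C}\oplus\mathcal{C}_{\mathrm{contr}}$, the verified identities $\delta_{s,t}\circ\rho_{\mathcal{J}}=\rho_{\mathcal{J}}\circ\delta_{s,t}$ and $\delta_{s,t}\circ h_{\mathcal{J}}+h_{\mathcal{J}}\circ\delta_{s,t}=\operatorname{id}-\operatorname{in}\circ\rho_{\mathcal{J}}$, and the chain isomorphism $\operatorname{isom}_{\mathcal{J}}$) fed into the standard deformation-retract argument. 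You merely make explicit the homological-algebra glue and the concatenation over a sequence of moves, which the paper leaves implicit, and you correctly identify Lemmas~\ref{1to1} and~\ref{+--lem} as the genuinely computational content.
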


\begin{thm}\label{t2}
Let $\delta$ be a given differential : $\mathcal{C}^{i}$ $\to$ $\mathcal{C}^{i+1}$ defined in \cite[Section 2]{jacobsson} except for replacing the Frobenius calculus \cite[Figure 3]{jacobsson} in the definition of the differential by any calculus satisfying (\ref{dif}).  
If (\ref{dif}) satisfies (\ref{f2})--(\ref{f4}), there is a cohomology $\mathcal{H}^{i}$ derived from $\delta$ preserving Reidemeister I.  If (\ref{dif}) satisfies (\ref{f2})--(\ref{f4}), (\ref{1to1eq}) and (\ref{+--eq}), there is a cohomology $\mathcal{H}^{i}$ preserving Reidemeister I\!I and I\!I\!I.  
\end{thm}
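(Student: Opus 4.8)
The plan is to read Theorem \ref{t2} as the abstraction of the case-by-case bookkeeping carried out in Section \ref{reide}. The retractions $\rho_{\mathcal{J}}$, homotopies $h_{\mathcal{J}}$ and isomorphisms $\operatorname{isom}_{\mathcal{J}}$ of Theorem \ref{t1} are defined purely on signed Kauffman states and never refer to the coefficient ring or to the numerical value of a merge or a split; hence they remain well defined for any $\delta$ of the form (\ref{dif}). Since $\delta$ is assumed to be a differential, $\mathcal{H}^{i}$ is defined, and Reidemeister invariance reduces, exactly as in Section \ref{reide}, to (a) the chain-map identity $\delta\circ\rho_{\mathcal{J}}=\rho_{\mathcal{J}}\circ\delta$, (b) the homotopy identity $\delta\circ h_{\mathcal{J}}+h_{\mathcal{J}}\circ\delta=\operatorname{id}-\operatorname{in}\circ\rho_{\mathcal{J}}$, and (c) the observation that $\operatorname{isom}_{\mathcal{J}}$ is an evident chain isomorphism (a relabelling) requiring no input from Figure \ref{frobenius}. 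The whole proof then amounts to recording the minimal subset of the relations of Figure \ref{frobenius} on which (a) and (b) depend.

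For the first move I would cite the remark after (\ref{1st-ret}) that $\delta\circ\rho_{1}=\rho_{1}\circ\delta$ uses only (\ref{dif}), and hence survives the passage to a general $\delta$. It then suffices to re-run the short computations (\ref{first-begin})--(\ref{1st-last}); inspecting them shows that the only relations of Figure \ref{frobenius} entering -- through the coefficient $\operatorname{m}(p:+)$ and the matching of the two resolutions of the $R1$ loop -- are (\ref{f2})--(\ref{f4}). Thus any $\delta$ satisfying (\ref{f2})--(\ref{f4}) makes $h_{1}$ a homotopy witnessing $\mathcal{H}^{i}(D')\cong\mathcal{H}^{i}(D)$ across $R1$, which is the first assertion.

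For the second and third moves I would partition the identities exactly as in Subsections \ref{2nd} and \ref{3rd}. Those using nothing beyond (\ref{dif}), and those using only (\ref{f2})--(\ref{f4}), transcribe verbatim. The computations flagged as depending on (\ref{f5})--(\ref{f6}) -- notably the first equality of (\ref{2nd_c1}) -- were already reduced in the text to Lemma \ref{1to1}, whose whole content is (\ref{1to1eq}); so assuming (\ref{1to1eq}) directly discharges them without any use of (\ref{f5})--(\ref{f6}). The computations depending on all of Figure \ref{frobenius} were reduced to Lemma \ref{+--lem}, i.e.\ to (\ref{+--eq}), and the one remaining $R3$ case (\ref{+-+,-eq}) follows from (\ref{+--eq}) by the localize-and-exchange step already indicated. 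Hence a $\delta$ satisfying (\ref{f2})--(\ref{f4}), (\ref{1to1eq}) and (\ref{+--eq}) inherits every identity needed for $R2$ and $R3$ invariance.

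The main obstacle I anticipate is not any single display but the \emph{completeness} of these reductions. One must confirm that every state on which $\delta\circ h_{\mathcal{J}}+h_{\mathcal{J}}\circ\delta$ might fail to equal $\operatorname{id}-\operatorname{in}\circ\rho_{\mathcal{J}}$ falls into one of the four classes above, and -- more delicately -- that the definitions of $h_{\mathcal{J}}$ and $\rho_{\mathcal{J}}$ imported from \cite{viro, ito} for $\delta_{0,0}$ genuinely never used the specific coefficients of that differential. The sharpest point is the localization passing from (\ref{+--eq}) to (\ref{+-+,-eq}): one has to check that, when two of the three signed circles around the crossing are merged, the third label acts as an inert spectator, so that (\ref{+--eq}) transports correctly after exchanging the roles of $p$ and $q$. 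Verifying this reduction carefully is the step that carries the real weight of the argument.
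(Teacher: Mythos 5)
Your proposal is correct and takes essentially the same route as the paper: the paper's entire proof of Theorem \ref{t2} is the observation that Section \ref{reide} was deliberately organized so that each identity is tagged with exactly the relations of Figure \ref{frobenius} it uses, with the cases involving (\ref{f5})--(\ref{f6}) or all of the calculus already reduced, via Lemmas \ref{1to1} and \ref{+--lem}, to (\ref{1to1eq}) and (\ref{+--eq}), and with (\ref{+-+,-eq}) obtained from (\ref{+--eq}) by the localize-and-exchange step. Your bookkeeping, including the treatment of Reidemeister I through (\ref{f2})--(\ref{f4}) and the spectator-circle check for (\ref{+-+,-eq}), reproduces that argument.
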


\vspace{1mm}
\scriptsize{\textsc{Department of Pure and Applied Mathematics Waseda University.  Tokyo 169-8555, Japan.  }}

\scriptsize{{\it E-mail address:} \texttt{noboru@moegi.waseda.jp}}

\begin{thebibliography}{99}
\bibitem{bar-natan}D. Bar-Natan, {\it Khovanov's homology for tangles and cobordisms}, Geom. Topol. {\bf{9}} (2005) 1443--1499.  
\bibitem{ito}N. Ito, {\it On Reidemeister invariance of the  Khovanov homology group of the Jones polynomial}, math.GT/0901.3952.  
\bibitem{jacobsson}M. Jacobsson, {\it An invariant of link cobordisms from Khovanov homology theory}, Algebr. Geom. Topol. {\bf{4}} (2004) 1211--1251.  
\bibitem{khovanov}M. Khovanov, {\it A categorification of the Jones polynomial}, Duke Math. J. {\bf{101}} (2000), 359--426.  
\bibitem{khovanovF}M. Khovanov, {\it Link homology and Frobenius extensions}, Fund. Math. {\bf{190}}, (2006), 179--190.  
\bibitem{lee}E. S. Lee, {\it An endomorphism of the Khovanov invariant}, Adv. Math. {\bf{197}} (2005), 554--586.  
\bibitem{MTV}M. Mackaay, P. Turner and P. Vaz, {\it A remark on Rasmussen's invariant of knots}, J. Knot Theory Ramifications {\bf{16}} (2007), 333--344.  
\bibitem{rasmussen}J. Rasmussen, {\it Khovanov homology and the slice genus}, math.GT/0402131.  
\bibitem{turner}P. Turner, {\it Five lectures on Khovanov homology}, math.GT/0606464.  
\bibitem{viro}O. Viro, {\itshape Khovanov homology, its definitions and ramifications}, Fund. Math. {\bf{184}} (2004), 317--342.  
\end{thebibliography}
\end{document}